\newtheorem*{thm*}{Theorem}
\newtheorem{thm}{Theorem}
\newtheorem{lem}[thm]{Lemma}
\newtheorem{pro}[thm]{Proposition}
\newtheorem{cor}[thm]{Corollary}
\newtheorem{conj}[thm]{Conjecture}
\newtheorem{ques}[thm]{Question}
\newtheorem{defn}[thm]{Definition}
\newcommand{\HH}{\mathcal{H}}
\newcommand{\II}{\mathcal{I}}
\newcommand{\DD}{\mathcal{D}}
\newcommand{\N}{\mathbb{N}}
\begin{document}

\title{The DP Color Function of Clique-Gluings of Graphs}

\author{Hemanshu Kaul$^1$, Michael Maxfield$^2$, Jeffrey A. Mudrock$^3$, and Seth Thomason$^2$}

\footnotetext[1]{Department of Applied Mathematics, Illinois Institute of Technology, Chicago, IL 60616. E-mail: {\tt {kaul@iit.edu}}}

\footnotetext[2]{Department of Mathematics, College of Lake County, Grayslake, IL 60030.}

\footnotetext[3]{Department of Mathematics and Statistics, University of South Alabama, Mobile, AL 36688.  E-mail:  {\tt {mudrock@southalabama.edu}}}

\maketitle

\begin{abstract}
DP-coloring (also called correspondence coloring) is a generalization of list coloring that has been widely studied in recent years after its introduction by Dvo\v{r}\'{a}k and Postle in 2015. As the analogue of the chromatic polynomial of a graph $G$, $P(G,m)$, the DP color function of $G$, denoted by $P_{DP}(G,m)$, counts the minimum number of DP-colorings over all possible $m$-fold covers. Formulas for chromatic polynomials of clique-gluings of graphs, a fundamental graph operation, are well-known, but the effect of such gluings on the DP color function is not well understood. In this paper we study the DP color function of $K_p$-gluings of graphs. Recently, Becker et. al. asked whether $P_{DP}(G,m) \leq (\prod_{i=1}^n P_{DP}(G_i,m))/\left( \prod_{i=0}^{p-1} (m-i) \right)^{n-1}$ whenever $m \geq p$, where the expression on the right is the DP-coloring analogue of the corresponding chromatic polynomial formula for a $K_p$-gluing, $G$, of $G_1, \ldots, G_n$.  Becker et. al. showed this inequality holds when $p=1$. In this paper we show this inequality holds for edge-gluings ($p=2$). On the other hand, we show it does not hold for triangle-gluings ($p=3$), which also answers a question of Dong and Yang (2021). Finally, we show a relaxed version, based on a class of $m$-fold covers that we conjecture would yield the fewest DP-colorings for a given graph, of the inequality holds when $p \geq 3$. 

\medskip

\noindent {\bf Keywords.} DP-coloring, correspondence coloring, chromatic polynomial, DP color function, clique-gluing, clique-sum.

\noindent \textbf{Mathematics Subject Classification.} 05C15, 05C30, 05C69

\end{abstract}

\section{Introduction}\label{intro}

In this paper all graphs are nonempty, finite, simple graphs unless otherwise noted.  Generally speaking we follow West~\cite{W01} for terminology and notation.  The set of natural numbers is $\N = \{1,2,3, \ldots \}$.  For $m \in \N$, we write $[m]$ for the set $\{1, \ldots, m \}$.  Given a set $A$, $\mathcal{P}(A)$ is the power set of $A$.  If $G$ is a graph and $S, U \subseteq V(G)$, we use $G[S]$ for the subgraph of $G$ induced by $S$, and we use $E_G(S, U)$ for the set consisting of all the edges in $E(G)$ that have one endpoint in $S$ and the other in $U$.  If $u$ and $v$ are adjacent in $G$, $uv$ or $vu$ refers to the edge between $u$ and $v$.  For $v \in V(G)$, we write $d_G(v)$ for the degree of vertex $v$ in the graph $G$ and $\Delta(G)$ for the maximum degree of a vertex in $G$.  We write $N_G(v)$ (resp. $N_G[v]$) for the neighborhood (resp. closed neighborhood) of vertex $v$ in the graph $G$.  We use $\omega(G)$ to denote the clique number of the graph $G$.  If $e \in E(G)$, we write $G \cdot e$ for the graph obtained from $G$ by contracting the edge $e$.  When $C$ is a cycle on $n$ vertices ($n \geq 3$ since $C$ is simple), $V(C)= \{v_1, \ldots, v_n \}$, and $E(C) = \{\{v_1,v_2 \}, \{v_2, v_3 \}, \ldots, \{v_{n-1}, v_n \}, \{v_n, v_1 \} \}$, then we say the vertices of $C$ are written in \emph{cyclic order} when we write $v_1, \ldots, v_n$.  If $G$ and $H$ are vertex disjoint graphs, we write $G \vee H$ for the join of $G$ and $H$. When $G = K_1$, $G \vee H$ is the \emph{cone} of $H$, and the vertex in $V(G)$ is called the \emph{universal vertex} of $G \vee H$.  

\subsection{List Coloring and DP-Coloring} \label{basic}

In the classical vertex coloring problem we wish to color the vertices of a graph $G$ with up to $m$ colors from $[m]$ so that adjacent vertices receive different colors, a so-called \emph{proper $m$-coloring}.  List coloring is a variation on classical vertex coloring that was introduced independently by Vizing~\cite{V76} and Erd\H{o}s, Rubin, and Taylor~\cite{ET79} in the 1970s.  For list coloring, we associate a \emph{list assignment} $L$ with a graph $G$ such that each vertex $v \in V(G)$ is assigned a list of available colors $L(v)$ (we say $L$ is a list assignment for $G$).  We say $G$ is \emph{$L$-colorable} if there is a proper coloring $f$ of $G$ such that $f(v) \in L(v)$ for each $v \in V(G)$ (we refer to $f$ as a \emph{proper $L$-coloring} of $G$).  A list assignment $L$ is called a \emph{$k$-assignment} for $G$ if $|L(v)|=k$ for each $v \in V(G)$.  We say $G$ is \emph{$k$-choosable} if $G$ is $L$-colorable whenever $L$ is a $k$-assignment for $G$. 

In 2015, Dvo\v{r}\'{a}k and Postle~\cite{DP15} introduced a generalization of list coloring called DP-coloring (they called it correspondence coloring) in order to prove that every planar graph without cycles of lengths 4 to 8 is 3-choosable.  Intuitively, DP-coloring is a variation on list coloring where each vertex in the graph still gets a list of colors, but identification of which colors are different can change from edge to edge.  Following~\cite{BK17}, we now give the formal definition. Suppose $G$ is a graph.  A \emph{cover} of $G$ is a pair $\mathcal{H} = (L,H)$ consisting of a graph $H$ and a function $L: V(G) \rightarrow \mathcal{P}(V(H))$ satisfying the following four requirements:

\vspace{5mm}

\noindent(1) the set $\{L(u) : u \in V(G) \}$ is a partition of $V(H)$ of size $|V(G)|$; \\
(2) for every $u \in V(G)$, the graph $H[L(u)]$ is complete; \\
(3) if $E_H(L(u),L(v))$ is nonempty, then $u=v$ or $uv \in E(G)$; \\
(4) if $uv \in E(G)$, then $E_H(L(u),L(v))$ is a matching (the matching may be empty).

\vspace{5mm}

Suppose $\mathcal{H} = (L,H)$ is a cover of $G$.  We refer to the edges of $H$ connecting distinct parts of the partition $\{L(v) : v \in V(G) \}$ as \emph{cross-edges}.  An \emph{$\mathcal{H}$-coloring} of $G$ is an independent set in $H$ of size $|V(G)|$.  It is immediately clear that an independent set $I \subseteq V(H)$ is an $\mathcal{H}$-coloring of $G$ if and only if $|I \cap L(u)|=1$ for each $u \in V(G)$.  We say $\mathcal{H}$ is \emph{$m$-fold} if $|L(u)|=m$ for each $u \in V(G)$~\footnote{Throughout the following, for any $u\in V(G)$ we always label the vertices in $L(u)$ by $(u, 1), (u, 2), \ldots, (u, m)$.}. An $m$-fold cover $\mathcal{H}$ is a \emph{full cover} if for each $uv \in E(G)$, the matching $E_{H}(L(u),L(v))$ is perfect.  The \emph{DP-chromatic number} of $G$, $\chi_{DP}(G)$, is the smallest $m \in \N$ such that $G$ has an $\mathcal{H}$-coloring whenever $\mathcal{H}$ is an $m$-fold cover of $G$. 

Suppose $\mathcal{H} = (L,H)$ is an $m$-fold cover of $G$. 
Suppose $U \subseteq V(G)$. Let $\mathcal{H}_{U} = (L_{U},H_{U})$ where $L_{U}$ is the restriction of $L$ to $U$ and $H_{U} = H[\bigcup_{u \in U} L(u)]$. Clearly, $\mathcal{H}_{U}$ is an $m$-fold cover of $G[U]$. We call $\mathcal{H}_{U}$ the \emph{subcover of }$\mathcal{H}$\emph{ induced by }$U$.
Suppose $G'$ is a subgraph of $G$. Let $\mathcal{H}_{G'} = (L_{G'},H_{G'})$ where $L_{G'}$ is the restriction of $L$ to $V(G')$ and $H_{G'}=H[\bigcup_{u \in V(G')} L(u)] - \bigcup_{uv \in E(G) - E(G')}E_H(L(u),L(v))$. Clearly, $\mathcal{H}_{G'}$ is an $m$-fold cover of $G'$. We call $\mathcal{H}_{G'}$ the \emph{subcover of }$\mathcal{H}$\emph{ corresponding to }$G'$.

Suppose $\mathcal{H} = (L,H)$ is an $m$-fold cover of $G$.  We say that $\mathcal{H}$ has a \emph{canonical labeling} if it is possible to name the vertices of $H$ so that $L(u) = \{ (u,j) : j \in [m] \}$ and $(u,j)(v,j) \in E(H)$ for each $j \in [m]$ whenever $uv \in E(G)$.~\footnote{When $\mathcal{H}=(L,H)$ has a canonical labeling, we will always refer to the vertices of $H$ using this naming scheme.}  Now, suppose $\mathcal{H}$ has a canonical labeling and $G$ has a proper $m$-coloring.  Then, if $\mathcal{I}$ is the set of $\mathcal{H}$-colorings of $G$ and $\mathcal{C}$ is the set of proper $m$-colorings of $G$, the function $f: \mathcal{C} \rightarrow \mathcal{I}$ given by $f(c) = \{ (v, c(v)) : v \in V(G) \}$ is a bijection.  Also, given an $m$-assignment $L$ for a graph $G$, it is easy to construct an $m$-fold cover $\mathcal{H}'$ of $G$ such that $G$ has an $\mathcal{H}'$-coloring if and only if $G$ has a proper $L$-coloring (see~\cite{BK17}). In fact, there is a one-to-one correspondence between proper $L$-colorings of $G$ and the $\mathcal{H}'$-colorings of $G$. So, it is natural to consider the problem of counting the DP-colorings of a graph $G$ as a generalization of the chromatic polynomial of $G$ and the counting of list colorings of $G$.
 
\subsection{Counting Proper Colorings and List Colorings}

In 1912 Birkhoff introduced the notion of the chromatic polynomial with the hope of using it to make progress on the four color problem.  For $m \in \N$, the \emph{chromatic polynomial} of a graph $G$, $P(G,m)$, is the number of proper $m$-colorings of $G$. It is well-known that $P(G,m)$ is a polynomial in $m$ of degree $|V(G)|$ (see~\cite{B12}).   For example, $P(K_n,m) = \prod_{i=0}^{n-1} (m-i)$, $P(C_n,m) = (m-1)^n + (-1)^n (m-1)$, and $P(T,m) = m(m-1)^{n-1}$ whenever $T$ is a tree on $n$ vertices (see~\cite{W01}). 

We now mention a chromatic polynomial formula that will be important in this paper.  Suppose that $n \geq 2$, $G_1, \ldots, G_n$ are vertex disjoint graphs, and $1 \le p \le \min_{i \in [n]}\{\omega(G_i)\}$.  Choose a copy of $K_p$ contained in each $G_i$ and form a new graph $G$, called a \emph{$K_p$-gluing of $G_1, \ldots, G_n$}, from the union of $G_1, \ldots, G_n$ by arbitrarily identifying the chosen copies of $K_p$; that is, if $\{u_{i,1}, \ldots, u_{i,p} \}$ is the vertex set of the chosen copy of $K_p$ in $G_i$ for each $i \in [n]$, then identify the vertices $u_{1,j}, \ldots, u_{n,j}$ as a single vertex $u_j$ for each $j \in [p]$.~\footnote{This is equivalent to the well-studied notion of \emph{clique-sum} of  $G_1, \ldots, G_n$ where no edges are removed after the identification of the cliques.\label{clique-sum}} When $p=1$ this is called \emph{vertex-gluing}, when $p=2$ this is called \emph{edge-gluing} (see~\cite{DKT05}), and when $p=3$ we will call it \emph{triangle-gluing}. Let $\bigoplus_{i=1}^{n}(G_i,p)$ denote the family of all $K_p$-gluings of $G_1, \ldots, G_n$. It is well-known that for any $G\in \bigoplus_{i=1}^{n}(G_i,p)$, $P(G,m) = \prod_{i=1}^n P(G_i,m)/ \left( \prod_{i=0}^{p-1} (m-i) \right)^{n-1}$ whenever $m \geq p$ (see~\cite{B94, DKT05}).

Clique-gluing and the closely related clique-sum are fundamental graph operations which have been used to give a structural characterization of many families of graphs (see the discussion and examples in~\cite{KT90}). A simple example is that chordal graphs are precisely the graphs that can be formed by clique-gluings of cliques. While the most famous example would be Robertson and Seymour's seminal Graph Minor Structure Theorem  (\cite{RS03}) characterizing minor-free families of graphs.

The notion of chromatic polynomial was extended to list coloring in the early 1990s~\cite{AS90}. If $L$ is a list assignment for $G$, we use $P(G,L)$ to denote the number of proper $L$-colorings of $G$. The \emph{list color function} $P_\ell(G,m)$ is the minimum value of $P(G,L)$ where the minimum is taken over all possible $m$-assignments $L$ for $G$.  Clearly, $P_\ell(G,m) \leq P(G,m)$ for each $m \in \N$.  In general, the list color function can differ significantly from the chromatic polynomial for small values of $m$.  However, for large values of $m$,  Dong and Zhang~\cite{DZ22} (improving upon results in~\cite{D92}, \cite{T09}, and~\cite{WQ17}) showed that for any graph $G$ with at least 4 edges, $P_{\ell}(G,m)=P(G,m)$ whenever $m \geq |E(G)|-1$.  It is also known that $P_{\ell}(G,m)=P(G,m)$ for all $m \in \N$ when $G$ is a cycle or chordal (see~\cite{KN16, AS90}). See~\cite{KK22, T09} for a survey of known results and open questions on the list color function.

\subsection{The DP Color Function and Motivating Question}

Two of the current authors (Kaul and Mudrock in~\cite{KM19}) introduced a DP-coloring analogue of the chromatic polynomial to gain a better understanding of DP-coloring and use it as a tool for making progress on some open questions related to the list color function.  Since its introduction in 2019, the DP color function has received some attention in the literature (see~\cite{BH21, BK21, DY21, DY22, HK21, KM19, KM21, M21, MT20}).  Suppose $\mathcal{H} = (L,H)$ is a cover of graph $G$.  Let $P_{DP}(G, \mathcal{H})$ be the number of $\mathcal{H}$-colorings of $G$.  Then, the \emph{DP color function} of $G$, $P_{DP}(G,m)$, is the minimum value of $P_{DP}(G, \mathcal{H})$ where the minimum is taken over all possible $m$-fold covers $\mathcal{H}$ of $G$.~\footnote{We take $\N$ to be the domain of the DP color function of any graph.} It is easy to show that for any graph $G$ and $m \in \N$, $P_{DP}(G, m) \leq P_\ell(G,m) \leq P(G,m)$.  Note that if $G$ is a disconnected graph with components: $H_1, H_2, \ldots, H_t$, then $P_{DP}(G, m) = \prod_{i=1}^t P_{DP}(H_i,m)$.  So, we will only consider connected graphs from this point forward unless otherwise noted.

The list color function and DP color function of certain graphs behave similarly.  However, for some graphs there are surprising differences.  For example, similar to the list color function,  $P_{DP}(G,m) = P(G,m)$ for every $m \in \N$  whenever $G$ is chordal or an odd cycle.  On the other hand, $P_{DP}(C_{2k+2},m)=(m-1)^{2k+2}-1 < P(C_{2k+2},m)$ whenever $m \geq 2$ (see~\cite{KM19})~\footnote{These results on the DP color functions of cycles and chordal graphs will be useful to keep in mind for this paper.}.  Even more dramatically, the following result recently appeared in the literature.

\begin{thm} [\cite{DY21}] \label{thm: evengirth}
For graph $G$ let $\ell_G : E(G) \rightarrow \N \cup \{\infty\}$ be the function that maps each cut-edge in $G$ to $\infty$ and maps each non-cut-edge $e \in E(G)$ to the length of a shortest cycle in $G$ containing $e$.  If $G$ contains an edge $l$ such that $\ell_G(l)$ is even, then there exists $N \in \N$ such that $P_{DP}(G,m) < P(G,m)$ whenever $m \geq N$.  
\end{thm}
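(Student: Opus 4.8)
The plan is to produce, for every sufficiently large $m$, a single $m$-fold cover $\mathcal{H}$ of $G$ with $P_{DP}(G,\mathcal{H})<P(G,m)$; since $P_{DP}(G,m)\le P_{DP}(G,\mathcal{H})$, that suffices. Fix an edge $l=uv$ with $\ell_G(l)$ even. Being a finite even number, $\ell_G(l)$ is at least $4$, so $l$ is not a cut-edge and lies on no triangle; in particular $u$ and $v$ have no common neighbor in $G$, a fact used throughout. For $m\ge 2$ fix a derangement $\sigma$ of $[m]$ and let $\mathcal{H}=(L,H)$ be the $m$-fold cover with $L(w)=\{(w,j):j\in[m]\}$ for all $w$, with $E_H(L(w),L(w'))=\{(w,j)(w',j):j\in[m]\}$ for each edge $ww'\neq l$, and with $E_H(L(u),L(v))=\{(u,\sigma(j))(v,j):j\in[m]\}$; the only departure from a canonical labeling is that the matching across $l$ is twisted by $\sigma$.

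The first step is to express $P_{DP}(G,\mathcal{H})$ through chromatic polynomials. An $\mathcal{H}$-coloring of $G$ is precisely a proper coloring $c$ of $G-l$ with $\sigma(c(v))\neq c(u)$. Sorting the proper colorings of $G-l$ by the ordered pair $(c(u),c(v))$ and invoking color-symmetry (the number of proper colorings of $G-l$ assigning $u,v$ a fixed ordered pair of colors depends only on whether those colors coincide), a short count using $P(G\cdot l,m)=\tfrac1m\cdot\#\{c\text{ proper on }G-l:\ c(u)=c(v)\}$ (valid since $u,v$ have no common neighbor) yields
\[
P_{DP}(G,\mathcal{H})\;=\;P(G,m)-\frac{R(m)}{m-1},\qquad R(m):=P(G-l,m)-m\,P(G\cdot l,m),
\]
and, by deletion--contraction, $R(m)=P(G,m)-(m-1)P(G\cdot l,m)$ as well. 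It therefore suffices to show the polynomial $R(m)$ is positive for all large $m$.

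The heart of the argument evaluates $R(m)$ via the standard subgraph expansion $P(\Gamma,m)=\sum_{A\subseteq E(\Gamma)}(-1)^{|A|}m^{c(\Gamma,A)}$, where $c(\Gamma,A)$ is the number of components of the spanning subgraph of $\Gamma$ with edge set $A$. Since $u$ and $v$ have no common neighbor, contracting $l$ induces a bijection $\beta:E(G)\setminus\{l\}\to E(G\cdot l)$, and for $A\subseteq E(G)\setminus\{l\}$ the spanning subgraph $(V(G\cdot l),\beta(A))$ is obtained from $(V(G),A)$ by identifying $u$ with $v$; hence $c(V(G\cdot l),\beta(A))$ equals $c(V(G),A)$ when $u,v$ lie in the same component of $(V(G),A)$ and $c(V(G),A)-1$ otherwise. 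Substituting $B=\beta(A)$ in the expansion of $P(G\cdot l,m)$ and subtracting it from that of $P(G-l,m)$, the sets $A$ for which $u,v$ lie in different components cancel entirely, leaving
\[
R(m)\;=\;(m-1)\!\!\!\!\sum_{\substack{A\subseteq E(G)\setminus\{l\}\\ u,v\text{ in one component of }(V(G),A)}}\!\!\!\!(-1)^{|A|+1}\,m^{\,c(V(G),A)}\;=:\;(m-1)\,Q(m).
\]

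It remains to read off the leading term of $Q(m)$. For each $A$ appearing in the sum, the component of $(V(G),A)$ containing $u$ also contains $v$, hence contains a path joining $u$ and $v$, which has at least $\ell_G(l)-1$ edges because $\ell_G(l)-1$ equals the distance from $u$ to $v$ in $G-l$. A rank count then gives $c(V(G),A)\le |V(G)|-(\ell_G(l)-1)$, with equality exactly when $A$ is the edge set of a shortest $u$-to-$v$ path in $G-l$ (any chord of such a path would produce a shorter one). Each such $A$ has $|A|=\ell_G(l)-1$, which is \emph{odd} precisely because $\ell_G(l)$ is even, so it contributes $+\,m^{\,|V(G)|-\ell_G(l)+1}$ to $Q(m)$; moreover the number of such $A$ is a positive integer (at least one, as $G$ contains a cycle of length $\ell_G(l)$ through $l$) and $|V(G)|-\ell_G(l)+1\ge 1$. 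Thus $Q$ has positive degree and positive leading coefficient, so $Q(m)\to\infty$; choosing $N\ge 2$ with $Q(m)>0$ for all $m\ge N$ gives $P_{DP}(G,m)\le P_{DP}(G,\mathcal{H})=P(G,m)-Q(m)<P(G,m)$ for all $m\ge N$, as desired. The step I expect to be the main obstacle is the subgraph-expansion computation together with the extraction of the leading term of $Q$: comparing $P(G-l,m)$ with $m\,P(G\cdot l,m)$ by degrees and leading coefficients alone is hopeless, since their top $\ell_G(l)-2$ coefficients coincide; it is the subgraph expansion --- made tractable by the bijection $\beta$, whose existence is exactly where $\ell_G(l)\ge 4$ is used --- that surfaces the decisive term, and the parity of $|A|=\ell_G(l)-1$ is where the evenness hypothesis finally does its work.
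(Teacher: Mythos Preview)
The paper does not prove this theorem; it is quoted from Dong and Yang~\cite{DY21} and used only as motivation, so there is no in-paper proof to compare against. That said, your argument is correct and essentially reconstructs the approach one would expect: twist the matching over the edge $l$ by a derangement, rewrite the resulting DP-coloring count as $P(G,m)-Q(m)$ with $Q(m)=\bigl(P(G-l,m)-mP(G\cdot l,m)\bigr)/(m-1)$, and use the subgraph expansion to identify the leading term of $Q$ as the number of shortest $u$--$v$ paths in $G-l$, with the sign governed by the parity of $\ell_G(l)-1$.

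One small slip: the displayed identity $P(G\cdot l,m)=\tfrac1m\cdot\#\{c\text{ proper on }G-l:\ c(u)=c(v)\}$ is off by the factor $1/m$; since $u$ and $v$ have no common neighbour, proper $m$-colourings of $G\cdot l$ correspond bijectively to proper $m$-colourings of $G-l$ with $c(u)=c(v)$, so $P(G\cdot l,m)$ equals that count (not one $m$th of it). This does not affect the rest of the argument, because the formula $P_{DP}(G,\mathcal{H})=P(G,m)-Q(m)$ you derive afterwards is correct, and the subgraph-expansion analysis of $Q$ is clean: the bound $c(V(G),A)\le |V(G)|-\ell_G(l)+1$, the characterisation of equality as ``$A$ is a shortest $u$--$v$ path in $G-l$'' via the chord argument, and the parity conclusion are all sound.
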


This result is particularly interesting since we know the list color function of any graph eventually equals its chromatic polynomial.  With this in mind, our motivation for this paper began with a question presented in~\cite{BH21}.  More specifically, Theorem~\ref{thm: evengirth} tells us that the DP color function of a $K_p$-gluing of graphs may not eventually equal the chromatic polynomial of the resulting graph.  We may wonder however if the formula for the chromatic polynomial of a $K_p$-gluing of graphs (i.e., for any $G\in \bigoplus_{i=1}^{n}(G_i,p)$, $P(G,m) = \prod_{i=1}^n P(G_i,m)/ \left( \prod_{i=0}^{p-1} (m-i) \right)^{n-1}$ whenever $m \geq p$) has a DP-coloring analogue.

\begin{ques} [\cite{BH21}] \label{ques: classify}
If $p \ge 1$, $n \geq 2$, $G_1, \ldots, G_n$ are vertex disjoint graphs, and $G \in \bigoplus_{i=1}^{n}(G_i,p)$, is it the case that 
$$P_{DP}(G,m) \leq \frac{\prod_{i=1}^n P_{DP}(G_i,m)}{\left( \prod_{i=0}^{p-1} (m-i) \right)^{n-1}}$$ 
for all $m \geq p$? 
\end{ques} 

In~\cite{BH21} it is shown that the inequality in Question~\ref{ques: classify} need not be an equality in all situations: when $p=1$, $n=2$, $G_1 = G_2 = K_1 \vee C_4$, and $G$ is formed by gluing the universal vertices of $G_1$ and $G_2$, then $P_{DP}(G,4) < (P_{DP}(G_1,4)P_{DP}(G_2,4))/4$.  Furthermore, it is shown in~\cite{BH21} that the answer to Question~\ref{ques: classify} is yes when $p=1$.  It is worth mentioning that a simpler version of Question~\ref{ques: classify} has also recently appeared in the literature.  Recall that a vertex $v$ in a graph $G$ is called \emph{simplicial} if either $d_G(v)=0$ or $G[N_G(v)]$ is a complete graph.

\begin{ques} [\cite{DY21}] \label{ques: easier}
If $v$ is a simplicial vertex of $G$, is it true that for all positive integers $m \geq d_G(v)$, $P_{DP}(G,m) \leq (m- d_G(v)) P_{DP}(G - \{v\}, m)$?
\end{ques}

To see that Question~\ref{ques: easier} is a simpler version of Question~\ref{ques: classify}, first note that the inequality obviously holds when $d_G(v)=0$.  Then, suppose $v$ is a simplicial vertex of $G$ with positive degree, and let $t=d_G(v)$.  Then, $G$ is a $K_{t}$-gluing of $G - \{v\}$ and a copy of $K_{t+1}$.  Dong and Yang showed in~\cite{DY21} that Question~\ref{ques: easier} has a positive answer when $d_G(v) \in \{0,1,2\}$. In this paper we make further progress on Question~\ref{ques: classify}, and we show the answer to Question~\ref{ques: easier} is no when $d_G(v)=3$.  

\subsection{Outline of the Paper and a Conjecture}

We now present an outline of the paper.  In Section~\ref{edge} we show that the answer to Question~\ref{ques: classify} is yes for edge-gluings ($p=2$) which generalizes the result of Dong and Yang~\cite{DY21} that shows the answer to Question~\ref{ques: easier} is yes when $d_G(v)=2$.

\begin{thm}\label{thm: upperbound2}
Suppose that $G_1,\ldots,G_n$ are vertex disjoint graphs for some $n \geq 2$ with $u_iv_i \in E(G_i)$ for each $i \in [n]$. Suppose that $G$ is the graph obtained by identifying $u_1,\ldots,u_n$ as the same vertex $u$ and $v_1,\ldots,v_n$ as the same vertex $v$. Then \\ $P_{DP}(G,m) \leq \prod_{i=1}^{n} P_{DP}(G_i,m)/(m(m-1))^{n-1}$ whenever $m \geq 2$. 
\end{thm}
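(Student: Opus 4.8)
The plan is to take a family of $m$-fold covers $\mathcal{H}_i$ of $G_i$ that (nearly) achieve $P_{DP}(G_i,m)$ and glue them along the edge $u_iv_i$ to produce an $m$-fold cover $\mathcal{H}$ of $G$; then bound $P_{DP}(G,\mathcal{H})$ from above by the right-hand side. The first step is a normalization: for the edge $u_iv_i$ in each $G_i$, I want to argue that among covers minimizing the number of $\mathcal{H}_i$-colorings we may assume the matching $E_{H_i}(L(u_i),L(v_i))$ is \emph{perfect} and, after relabeling, is the ``canonical'' matching $(u_i,j)(v_i,j)$ for $j\in[m]$. Perfectness is standard (adding matching edges cannot increase the count of $\mathcal{H}_i$-colorings); making it canonical is just a permutation of the labels of $L(v_i)$, which is a cover isomorphism and does not change $P_{DP}(G_i,\mathcal{H}_i)$. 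So WLOG each $\mathcal{H}_i$ restricts to the canonical full cover on the edge $u_iv_i$.

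Next I would perform the gluing. Identify the vertex sets $L(u_i)$ across all $i$ into a single part $L(u)=\{(u,j):j\in[m]\}$, and similarly $L(v_i)$ into $L(v)$; because each $\mathcal{H}_i$ has the canonical matching on $u_iv_i$, these identifications are compatible and produce a well-defined $m$-fold cover $\mathcal{H}=(L,H)$ of $G$ (conditions (1)–(4) are immediate, the matching on $uv$ being the canonical perfect matching). The key combinatorial point is then to count $\mathcal{H}$-colorings by conditioning on the colors assigned to $u$ and $v$. For each ordered pair $(a,b)\in[m]^2$ with $a\neq b$ — these are the $m(m-1)$ admissible choices on the edge $uv$ — the number of $\mathcal{H}$-colorings of $G$ extending $(u,a),(v,b)$ is exactly the product over $i$ of the number of $\mathcal{H}_i$-colorings of $G_i$ extending $(u_i,a),(v_i,b)$, since the covers $\mathcal{H}_i$ share only the parts $L(u),L(v)$ and are otherwise vertex-disjoint. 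Writing $N_i(a,b)$ for that per-pair count, we get
$$P_{DP}(G,\mathcal{H}) \;=\; \sum_{\substack{a,b\in[m]\\ a\neq b}} \prod_{i=1}^{n} N_i(a,b), \qquad P_{DP}(G_i,\mathcal{H}_i) \;=\; \sum_{\substack{a,b\in[m]\\ a\neq b}} N_i(a,b).$$

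The remaining step is an inequality between $\sum_{a\neq b}\prod_i N_i(a,b)$ and $\big(\prod_i\sum_{a\neq b}N_i(a,b)\big)/(m(m-1))^{n-1}$. Here is where I expect the main obstacle to lie, and it is genuinely the heart of the matter: this is a power-mean / Chebyshev-type inequality over the $m(m-1)$ index pairs, and it is \emph{false} for arbitrary nonnegative arrays $N_i(a,b)$ (that failure is exactly what drives the $p=3$ counterexample later in the paper). What saves the $p=2$ case is extra structure on the $N_i$: by a symmetrization argument one should be able to assume each $\mathcal{H}_i$ is chosen so that $N_i(a,b)$ is constant in $(a,b)$, or at least so that the relevant averages dominate — intuitively, a cover minimizing $P_{DP}(G_i,\mathcal{H}_i)$ can be taken ``as symmetric as possible'' on the edge $u_iv_i$. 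If $N_i(a,b)\equiv c_i$ for all admissible $(a,b)$, then $\sum_{a\neq b}\prod_i N_i(a,b) = m(m-1)\prod_i c_i$ while $\prod_i\sum_{a\neq b}N_i(a,b) = (m(m-1))^n\prod_i c_i$, and the claimed inequality holds with equality. So the crux is to prove the symmetrization lemma: that for the purpose of this bound we may replace each $\mathcal{H}_i$ by one whose edge-$u_iv_i$ ``profile'' $N_i(\cdot,\cdot)$ is constant (this likely uses an averaging over the $m!$ relabelings of one endpoint's part combined with an AM–GM / convexity estimate to control the cross terms), after which the counting identities above close the argument. I would also need to handle the boundary case $m=2$ separately, where $m(m-1)=2$ and the arrays are tiny, by direct inspection.
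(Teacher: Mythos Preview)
Your setup is exactly right through the counting identity
\[
P_{DP}(G,\mathcal{H}) \;=\; \sum_{a\neq b} \prod_{i=1}^{n} N_i(a,b),
\]
but the ``symmetrization lemma'' you propose is where the argument breaks. You cannot in general replace a minimizing cover $\mathcal{H}_i$ by one whose profile $N_i(\cdot,\cdot)$ is constant over admissible pairs: already for $G_i$ an even cycle (see the computations in the proof of Proposition~\ref{pro: cyclechord}) a minimizing cover has two distinct values of $N_i(a,b)$, depending on whether $a,b$ collide under a certain tree relabeling. So the reduction to constant profiles is false, and with it the hoped-for equality case.

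The paper sidesteps this entirely, and the idea is close to the averaging you gesture at but applied at a different level. Do the case $n=2$ first and induct. Keep $\mathcal{H}_1,\mathcal{H}_2$ fixed, but observe that for \emph{every} permutation $\sigma\in S_m$ you get a legitimate $m$-fold cover of $G$ by gluing $\mathcal{H}_1$ to $\mathcal{H}_2$ via $\sigma$ (this is the $\sigma$-amalgamated cover of Definition~\ref{defn: edge amalgamation}), and hence
\[
P_{DP}(G,m)\;\le\; D_\sigma \;:=\; \sum_{j_1,j_2} N_1(j_1,j_2)\,N_2(\sigma(j_1),\sigma(j_2)) \qquad\text{for every }\sigma\in S_m.
\]
Now average over $\sigma$: for fixed $j_1\neq j_2$, as $\sigma$ ranges over $S_m$ the pair $(\sigma(j_1),\sigma(j_2))$ hits each ordered pair of distinct elements exactly $(m-2)!$ times, so $\sum_{\sigma}D_\sigma=(m-2)!\,P_{DP}(G_1,m)P_{DP}(G_2,m)$. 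Since $P_{DP}(G,m)\le\min_\sigma D_\sigma\le\frac{1}{m!}\sum_\sigma D_\sigma$, you get the bound immediately. The point is that the freedom is not in symmetrizing each $\mathcal{H}_i$ separately, but in choosing how to glue them: one of the $m!$ gluings must be at most the average, and the average factors cleanly. No convexity or AM--GM is needed, and no special treatment of $m=2$ is required.
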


We end Section~\ref{edge} by showing how the techniques, including the relevant concepts of ``gluing'' and ''splitting'' covers of graphs, used to prove Theorem~\ref{thm: upperbound2} yield a new method for finding the formulas of the DP color functions of chorded cycles~\footnote{The DP color functions of all chorded cycles were determined in~\cite{KM19} using a different technique.}.

In Section~\ref{triangle} we show that the answer to both Questions~\ref{ques: classify} and~\ref{ques: easier} is no for $p=3$ and $d_G(v)=3$ respectively. We construct a counterexample with a graph based on tessellations of an appropriate number of triangles in $K_1  \vee \Theta(2,2,2)$ where $\Theta(2, 2, 2)$ consists of a pair of end vertices joined by three internally disjoint paths each of length two \footnote{More generally, a \emph{Theta graph} $\Theta(l_1, l_2, l_3)$ consists of a pair of end vertices joined by $3$ internally disjoint paths of lengths $l_1, l_2, l_3 \in \N$.}.  

While the answers to Questions~\ref{ques: classify} and~\ref{ques: easier} are unknown for each $p \geq 4$ and $d_G(v) \geq 4$ respectively, our result in Section~\ref{triangle} makes us suspect both questions have a negative answer for each of these values.  Nevertheless, in Section~\ref{general} we show that the answer to a relaxed version of Question~\ref{ques: classify} is yes.  Specifically, suppose $p \geq 1$, and $G$ is a graph such that $\{v_1,\ldots,v_p\}$ is a clique in $G$, and let $K= \{v_1 , \ldots , v_p\}$.  We say an $m$-fold cover $\HH = (L,H)$ of $G$ is \emph{conducive} to $\{v_1,\ldots,v_p\}$ if $\HH$ is full and the $m$-fold cover $\mathcal{H}_K$ of $G[K]$ admits a canonical labeling.  The \emph{$K$-canonical DP-Color Function of $G$}, $P_{DP}'(G,K,m)$, is the minimum value of $P_{DP}(G,\HH')$ where the minimum is taken over all $m$-fold covers $\HH'$ of $G$ conducive to $K$. Clearly, $P_{DP}'(G,K,m) \geq P_{DP}(G,m)$, and in fact, the construction in Section~\ref{triangle} yields the only example, of which we are aware, of a graph $G$ and clique $K$ in $G$ for which $P_{DP}'(G,K,m) > P_{DP}(G,m)$ for some $m \in \N$. So, the following result gives an affirmative answer to a relaxed version of Question~\ref{ques: classify}.

\begin{thm}\label{thm: upperboundFixed}
Suppose that $G_1,\ldots,G_n$ are vertex disjoint graphs for some $n \geq 2$ and $G \in \bigoplus_{i=1}^{n} (G_i,p)$ where for each $i \in [n]$, $K_i = \{u_{i,1},\ldots,u_{i,p}\}$ is a clique in $G_i$ and $G$ is obtained by identifying $u_{1,q},\ldots,u_{n,q}$ as the same vertex $u_q$ for each $q \in [p]$. Then $P_{DP}(G,m) \leq \prod_{i=1}^{n} P_{DP}'(G_i,K_i,m)/(\prod_{i=0}^{p-1} (m-i))^{n-1}$ whenever $m \geq p$.
\end{thm}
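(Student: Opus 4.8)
\emph{Proof proposal.} The plan is to exhibit, for each fixed $m \ge p$, a family of $m$-fold covers of $G$ and to bound the \emph{average} of $P_{DP}(G,\cdot)$ over that family. For each $i\in[n]$, fix an $m$-fold cover $\HH_i^\ast=(L_i^\ast,H_i^\ast)$ of $G_i$ that is conducive to $K_i$ and attains $P_{DP}(G_i,\HH_i^\ast)=P_{DP}'(G_i,K_i,m)$. Because $(\HH_i^\ast)_{K_i}$ admits a canonical labeling and $\HH_i^\ast$ is full, each matching on a $K_i$-edge is perfect and contains all pairs $(u_{i,q},j)(u_{i,q'},j)$, hence equals exactly that ``standard'' perfect matching; after renaming the vertices of $H_i^\ast$ we may assume this holds for every pair of distinct $q,q'\in[p]$. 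For $\vec\sigma=(\sigma_1,\dots,\sigma_n)\in(S_m)^n$, let $\HH_i^{\sigma_i}$ be the cover of $G_i$ obtained from $\HH_i^\ast$ by the relabeling $(u_{i,q},j)\mapsto(u_{i,q},\sigma_i(j))$ on all of $K_i$ (identity elsewhere); one checks $\HH_i^{\sigma_i}$ is again an $m$-fold cover of $G_i$, still with the standard matchings on $K_i$, and since it is isomorphic to $\HH_i^\ast$ we have $P_{DP}(G_i,\HH_i^{\sigma_i})=P_{DP}'(G_i,K_i,m)$. Now apply the ``gluing'' construction of Section~\ref{edge} to $\HH_1^{\sigma_1},\dots,\HH_n^{\sigma_n}$, identifying $(u_{1,q},j),\dots,(u_{n,q},j)$ into a single vertex $(u_q,j)$ for each $q\in[p]$ and $j\in[m]$; call the result $\HH^{\vec\sigma}$. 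The standardization of the matchings on each $K_i$ is precisely what makes this identification consistent, and the cover axioms for $\HH^{\vec\sigma}$ follow because every cross-edge of its underlying graph lies inside one block $\bigcup_{v\in V(G_i)}L^{\vec\sigma}(v)$.

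Next I would establish that $\HH^{\vec\sigma}$-colorings of $G$ ``factor through'' proper $m$-colorings of $K_p$. The subcover of $\HH^{\vec\sigma}$ over $\{u_1,\dots,u_p\}$ carries the standard canonical labeling of $K_p$, so its colorings are exactly the proper $m$-colorings of $K_p$; restricting an $\HH^{\vec\sigma}$-coloring of $G$ to a block gives an $\HH_i^{\sigma_i}$-coloring of $G_i$, and conversely any tuple of $\HH_i^{\sigma_i}$-colorings agreeing on $K_p$ glues to an $\HH^{\vec\sigma}$-coloring of $G$ (this is the ``splitting'' direction from Section~\ref{edge}). Writing $\Phi$ for the set of the $r:=\prod_{i=0}^{p-1}(m-i)$ proper $m$-colorings of $K_p$, and $N_i^{\ast}(\psi)$ for the number of $\HH_i^{\ast}$-colorings of $G_i$ restricting to $\psi\in\Phi$ on $K_i$, this yields $P_{DP}(G,\HH^{\vec\sigma})=\sum_{\phi\in\Phi}\prod_{i=1}^{n}N_i^{\ast}(\sigma_i^{-1}\circ\phi)$, where $\sigma_i^{-1}\circ\phi\in\Phi$ sends $u_q$ to $\sigma_i^{-1}(\phi(u_q))$; note also $\sum_{\psi\in\Phi}N_i^{\ast}(\psi)=P_{DP}'(G_i,K_i,m)$.

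Finally I would average over $\vec\sigma$ drawn uniformly and independently from $(S_m)^n$. Since $P_{DP}(G,m)\le P_{DP}(G,\HH^{\vec\sigma})$ for every $\vec\sigma$, it suffices to bound $\E[P_{DP}(G,\HH^{\vec\sigma})]$. By independence, $\E\bigl[\prod_{i}N_i^{\ast}(\sigma_i^{-1}\circ\phi)\bigr]=\prod_{i}\E\bigl[N_i^{\ast}(\sigma_i^{-1}\circ\phi)\bigr]$, and for fixed $\phi\in\Phi$ the map $\sigma_i\mapsto\sigma_i^{-1}\circ\phi$ is uniformly distributed over $\Phi$, so $\E\bigl[N_i^{\ast}(\sigma_i^{-1}\circ\phi)\bigr]=\tfrac1r\sum_{\psi\in\Phi}N_i^{\ast}(\psi)=\tfrac1r P_{DP}'(G_i,K_i,m)$. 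Summing over the $r$ colorings in $\Phi$ gives $\E[P_{DP}(G,\HH^{\vec\sigma})]=r\cdot r^{-n}\prod_{i=1}^{n}P_{DP}'(G_i,K_i,m)=\prod_{i=1}^{n}P_{DP}'(G_i,K_i,m)/r^{n-1}$, which is the claimed bound. The step I expect to be the main obstacle is the careful setup of the ``twist'' $\sigma_i$ together with the gluing/splitting correspondence: a single (untwisted) gluing only gives $P_{DP}(G,\HH)=\sum_{\phi}\prod_iN_i^\ast(\phi)$, which can be as large as $\prod_iP_{DP}'(G_i,K_i,m)$ if the $N_i^\ast$ are concentrated on one coloring, so the averaging over $\sigma_i$ — which redistributes each $N_i^\ast$ uniformly over $\Phi$ — is exactly what produces the factor $\bigl(\prod_{i=0}^{p-1}(m-i)\bigr)^{n-1}$ in the denominator.
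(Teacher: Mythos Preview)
Your proposal is correct and follows essentially the same averaging-over-twists idea as the paper's proof: both fix conducive covers $\HH_i$ attaining $P_{DP}'(G_i,K_i,m)$, build amalgamated covers of $G$ parameterized by permutations of $[m]$ acting on the clique labels, use the factoring $P_{DP}(G,\HH)=\sum_{\phi}\prod_i N_i(\phi)$ (the content of Lemma~\ref{lem: upperGenFixed}), and then average/pigeonhole to find one cover at or below the mean. The only differences are cosmetic: the paper inducts on $n$ (twisting a single $\sigma\in S_m$ at each step) and writes out the sum $\sum_{\sigma\in S_m}D_\sigma$ explicitly, whereas you handle all $n$ at once with independent twists $\sigma_1,\dots,\sigma_n$ and phrase the argument as an expectation—your version is slightly cleaner for general $n$ but the mathematics is the same.
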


Suppose that $K$ is a clique in $G$.  It is not hard to see that when $|K| \in [2]$, $P_{DP}'(G,K,m) = P_{DP}(G,m)$ for all $m \in \N$.  So, Theorem~\ref{thm: upperboundFixed} implies Theorem~\ref{thm: upperbound2}.~\footnote{We still give a direct proof of Theorem~\ref{thm: upperbound2} as the concepts introduced in Section~\ref{edge} are of independent interest, easier to apply, and help motivate our study of conducive covers.}  As we already mentioned, our counterexample in Section~\ref{triangle} demonstrates that this equality need not hold when $|K|=3$, but only for a small value of $m$ (i.e., $m=4$). This motivates the following conjecture.

\begin{conj} \label{ques: nature}
Suppose that $K$ is a clique in a graph $G$ with $|K| \geq 3$.  Then, there is an $N \in \N$ such that $P_{DP}'(G,K,m) = P_{DP}(G,m)$ whenever $m \geq N$.  
\end{conj}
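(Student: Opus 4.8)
\textbf{Proof proposal for Conjecture~\ref{ques: nature}.}

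The plan is to show that, for $m$ sufficiently large, every $m$-fold cover $\HH$ of $G$ can be modified, without increasing the number of $\HH$-colorings, into a cover that is conducive to $K$; this immediately forces $P_{DP}'(G,K,m) \le P_{DP}(G,m)$, and the reverse inequality is already noted in the excerpt. So fix a minimizing $m$-fold cover $\HH = (L,H)$ of $G$ with $P_{DP}(G,\HH) = P_{DP}(G,m)$. The first step is to reduce to \emph{full} covers: I would argue that for large $m$ a minimizing cover may be taken to be full. The idea is that a non-perfect matching on a cross-edge set $E_H(L(u),L(v))$ leaves some pair of colors with no conflict, and when $m$ is large one expects that ``filling in'' missing matching edges (rerouting to make the matching perfect) does not increase the count --- this type of argument already underlies the $p=1$ and $p=2$ results and the $K$-canonical definition, so I would either cite/adapt the relevant lemma from \cite{BH21, DY21} or prove a short local-swap lemma: adding a matching edge between two currently-nonadjacent vertices $(u,i),(v,j)$ can only delete $\HH$-colorings using both, and for large $m$ one can simultaneously reroute other edges so the net change is $\le 0$. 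The key point to make rigorous here is a counting estimate showing the ``loss'' from the new edge dominates any ``gain.''

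The second, central step is to canonicalize the cover restricted to $G[K] = K_p$. Since $\HH$ is now full, $\HH_K$ is a full $m$-fold cover of $K_p$; by relabeling the vertices of each $L(v_j)$ we may assume the cross-matchings among $L(v_1),\dots,L(v_p)$ form a prescribed pattern. A full cover of $K_p$ is determined by $\binom{p}{2}$ perfect matchings on $[m]$, i.e.\ by permutations $\sigma_{jk} \in S_m$ with the natural composition constraints up to relabeling; $\HH_K$ admits a canonical labeling precisely when these permutations can be simultaneously trivialized, which happens iff $\sigma_{jk}\sigma_{k\ell}\sigma_{\ell j} = \mathrm{id}$ for all triples $j,k,\ell$ (a ``flatness''/cocycle condition). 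The plan is to show that a minimizing cover must satisfy this cocycle condition once $m$ is large. I would analyze the obstruction: if some triple fails flatness, the subcover on those three vertices of $K$ already behaves like a triangle-gluing obstruction (cf.\ Section~\ref{triangle}), but the counterexample there lives only at $m=4$; for large $m$ I would show that the ``twist'' strictly reduces the number of extensions to the rest of $G$ by a factor that decays in $m$, contradicting minimality. Concretely, one compares $P_{DP}(G,\HH)$ with $P_{DP}(G,\HH')$ where $\HH'$ agrees with $\HH$ outside $G[K]$ but has its $K$-matchings untwisted; extensions of a fixed $\HH_K$-coloring of $K$ to $G$ depend only on which color is chosen at each $v_j$, and untwisting rearranges the bijection between $\HH$-colorings of $K$ and $\HH'$-colorings of $K$ so that the multiset of ``profiles'' seen by the rest of the graph only gets more uniform --- by a convexity/rearrangement argument (the number of extensions is a fixed function of the profile, summed over profiles) the total does not increase, and when the twist is nontrivial and $m$ is large enough that all $m!$ worth of profiles are ``reachable,'' it strictly decreases unless already flat.

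I expect the main obstacle to be the second step's strict-decrease claim: making precise the assertion that a genuine twist in the $K$-matchings costs something for large $m$. The delicate point is that the rest of the graph $G - (E(G[K]))$ could conceivably be arranged so that the twist is ``absorbed'' --- this is exactly what happens at $m=4$ in Section~\ref{triangle} --- so the argument must use largeness of $m$ essentially. I anticipate needing a lemma of the form: for any fixed finite ``gadget'' attached to $K$, there is $N$ such that for $m \ge N$ the number of DP-extensions over all covers of the gadget, viewed as a function of the permutation profile on $K$, is minimized (or its sum over a twisted vs.\ untwisted family of profiles is minimized) at the flat profile; this should follow from a compactness/finiteness argument on the finitely many ``shapes'' of gadget-covers combined with the observation that for large $m$ the count becomes a polynomial in $m$ whose leading behavior is profile-independent, so lower-order terms decide, and these favor flatness. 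Assembling these pieces and extracting a single $N$ (the maximum of the thresholds from Step~1, Step~2, and the finitely many gadget-shapes arising from subgraphs of $G$) completes the proof.
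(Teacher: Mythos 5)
This statement is labeled as a conjecture in the paper, and the paper offers no proof of it; the authors explicitly leave it open. So your proposal cannot be compared against an existing argument, and it must stand on its own. It does not: what you have written is a plan whose central step is essentially a restatement of the conjecture rather than a proof of it. Your Step 1 (reduction to full covers) is fine and in fact easier than you suggest, since adding cross-edges to complete a matching only destroys colorings, so the minimum over $m$-fold covers is always attained at a full cover, with no largeness of $m$ needed. Your identification of the obstruction in Step 2 as a cocycle condition on the permutations $\sigma_{jk}$ is also correct. But the heart of the matter is the claim that for large $m$ an ``untwisting'' of $\HH_K$ does not increase the number of colorings, and here you offer only the assertion that ``lower-order terms decide, and these favor flatness.'' That assertion is precisely the content of the conjecture. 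The $m=4$ counterexample in Section~\ref{triangle} shows that untwisting \emph{can} strictly increase the count, so any proof must isolate a concrete mechanism by which large $m$ reverses this, and no such mechanism appears in your sketch.

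Two further points of caution. First, your convexity/rearrangement intuition that ``more uniform profiles'' cannot increase the total is suspect: the paper's own computation for even-even chorded cycles uses the rearrangement inequality to show that twisted matchings give strictly \emph{fewer} colorings than canonical ones for \emph{all} $m \geq 3$, not just small $m$. The reason that example does not contradict the conjecture is that the twisted edge there is not inside a clique of size $\geq 3$; any successful argument must therefore exploit the completeness of $G[K]$ in an essential way, which your sketch never does. Second, the proposed ``compactness/finiteness argument on gadget-shapes'' does not obviously terminate in a single $N$, because the set of relevant cover-shapes on the rest of $G$ grows with $m$ (the permutations live in $S_m$), so the finiteness you invoke is not available without further work. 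In short, the proposal correctly frames the problem but leaves the conjecture's actual difficulty untouched.
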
 

If true, Conjecture~\ref{ques: nature} would yield an affirmative answer to both Questions~\ref{ques: classify} and~\ref{ques: easier} for sufficiently large $m$ by Theorem~\ref{thm: upperboundFixed}.  However, the truth of Conjecture~\ref{ques: nature} would actually say something quite deep about the nature of sufficiently large covers that yield the fewest colorings:  Given any graph $G$, clique $K$ in $G$, and sufficiently large $m$, there must be an $m$-fold cover $\HH$ of $G$ such that $P_{DP}(G, \HH) = P_{DP}(G,m)$ and $\HH$ is conducive to $K$.

\section{Edge-Gluings} \label{edge}

In this section we will prove Theorem~\ref{thm: upperbound2}.  We begin with some notation that will be used throughout the paper.  Whenever $\HH = (L,H)$ is an $m$-fold cover of $G$ and $P \subseteq V(H)$, we let $N(P,\HH)$ be the number of $\HH$-colorings containing $P$.  The following proposition will be of fundamental importance throughout the paper.

\begin{pro} [\cite{KM19}] \label{pro: tree}
Suppose $T$ is a tree and $\mathcal{H} = (L,H)$ is a full $m$-fold cover of $T$.  Then, $\mathcal{H}$ has a canonical labeling.
\end{pro}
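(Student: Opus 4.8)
The statement to prove is Proposition~\ref{pro: tree}: a full $m$-fold cover of a tree has a canonical labeling.

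\medskip

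\noindent\textbf{Proof proposal.}

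The plan is to induct on $|V(T)|$. For the base case $|V(T)| = 1$, the tree has no edges, so there are no constraints of the form $(u,j)(v,j) \in E(H)$ to satisfy; we simply name the single vertex's fiber $L(u) = \{(u,1), \ldots, (u,m)\}$ arbitrarily, and this is vacuously a canonical labeling.

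For the inductive step, suppose $|V(T)| = k \geq 2$ and the result holds for all trees with fewer than $k$ vertices. Since $T$ is a tree on at least two vertices, it has a leaf $w$; let $x$ be its unique neighbor. Let $T' = T - w$, which is a tree on $k-1$ vertices, and let $\HH' = \HH_{T'}$ be the subcover of $\HH$ corresponding to $T'$ (equivalently induced by $V(T')$, since $w$ is a leaf so removing it removes exactly the edge $wx$). Then $\HH'$ is a full $m$-fold cover of $T'$, so by the inductive hypothesis it has a canonical labeling: we may name the vertices of $H' = H_{T'}$ so that $L(u) = \{(u,j) : j \in [m]\}$ for each $u \in V(T')$ and $(u,j)(v,j) \in E(H')$ whenever $uv \in E(T')$. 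The only remaining task is to name the $m$ vertices of the fiber $L(w)$ consistently. Since $\HH$ is a full cover, the cross-edges $E_H(L(x), L(w))$ form a perfect matching $M$ between $L(x)$ and $L(w)$; as $x$'s fiber is already labeled $(x,1), \ldots, (x,m)$, each vertex $y \in L(w)$ is matched to exactly one $(x, j)$, and we define the name of $y$ to be $(w,j)$. This is well-defined and bijective because $M$ is a perfect matching, and it gives $(x,j)(w,j) \in E(H)$ for every $j \in [m]$. Combined with the labeling inherited from $\HH'$ on $T'$ — which is untouched — this is a canonical labeling of $\HH$, since the only edge of $T$ incident to $w$ is $wx$.

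The argument is essentially routine; the one point requiring a moment's care is checking that relabeling $L(w)$ via the matching $M$ does not disturb any previously established edge condition. This holds because $w$ has degree one in $T$, so the only edges of $H$ whose endpoints include a vertex of $L(w)$ and lie in different fibers are exactly those in $E_H(L(x), L(w))$ (by cover requirement (3)), and these are precisely the edges we are using to define the labels; no edge internal to a fiber of $T'$ or between two fibers of $T'$ is affected. Fullness is used exactly once, to guarantee that $M$ is a \emph{perfect} matching so that the new labeling of $L(w)$ is a genuine bijection with $[m]$ — without fullness the matching could omit some vertices of $L(w)$ and the naming scheme would fail. I expect no real obstacle here beyond bookkeeping.
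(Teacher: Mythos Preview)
The paper does not actually prove Proposition~\ref{pro: tree}; it is quoted from~\cite{KM19} and used as a black box. Your induction on $|V(T)|$ via stripping a leaf and labeling its fiber through the perfect matching to its neighbor is correct and is the standard argument for this fact, so there is nothing to compare.
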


We now present two important definitions that will be used in this Section.  We define separated covers, a natural analogue of ``splitting'' an $m$-fold cover of $G\in \bigoplus_{i=1}^{n}(G_i,p)$ into separate $m$-fold covers for each $G_i$ (this notion was first defined in~\cite{BH21}).

\begin{defn}\label{defn: separation}
For some $n \geq 2$, suppose that $G_1,\ldots,G_n$ are vertex disjoint graphs such that that $\{u_{i,1},\ldots,u_{i,p}\}$ is a clique in $G_i$ for each $i \in [n]$ and some $p \in \N$. Let $G$ be the graph obtained by identifying $u_{1,q},\ldots,u_{n,q}$ as the same vertex $u_q$ for each $q \in [p]$. Suppose $\HH = (L,H)$ is an arbitrary $m$-fold cover of $G$. For each $i \in [n]$, \textbf{the separated cover of $G_i$ obtained from $\HH$} is an $m$-fold cover $\HH_i = (L_i,H_i)$ of $G_i$ defined as follows. Assume that $L(x) = \{(x,j) : j \in [m]\}$ for each $x \in V(G)$. Let $L_i(x) = \{(x,j) : j \in [m]\}$ for each $x \in V(G_i)$. Construct edges of $H_i$ so that $H_i$ is isomorphic to $H_i' = H[(\bigcup_{x \in V(G_i) - \{u_{i,q} : q \in [p]\}} L(x)) \cup (\bigcup_{q=1}^{p} L(u_q))]$ and $f: V(H_i') \rightarrow V(H_i)$ given by
\[ f((x,j)) = \begin{cases}
    (x,j) & \text{if } x \in V(G_i) - \{u_{i,q} : q \in [p]\}\\
    (u_{i,q},j) & \text{if } x = u_q \text{ for some } q \in [p]
   \end{cases}
\]
is a graph isomorphism.
\end{defn}

Given vertex disjoint graphs $G_1, \ldots, G_n$, we define amalgamated cover, a natural analogue of ``gluing'' $m$-fold covers of each $G_i$ together so that we get an $m$-fold cover for some $G\in \bigoplus_{i=1}^{n}(G_i,2)$. In addition to specifying the graphs $G_i$, and the glued edge in each graph that leads to $G$, we also need an ordered list of permutations, $F=(f_2,\ldots, f_n)$. $F$ is simply a convenient device for keeping track of which edges in the covers of $G_i$ corresponding to the glued edge are identified in the amalgamated cover of $G$. A generalization of this definition and its illustration is given in Section~\ref{general}. 

\begin{defn}\label{defn: edge amalgamation}
For some $n \geq 2$, suppose that $G_1,\ldots,G_n$ are vertex disjoint graphs such that $u_iv_i \in E(G_i)$ for each $i \in [n]$. For each $i \in [n]$, suppose $\HH_i = (L_i,H_i)$ is an $m$-fold cover of $G_i$. Let $f_{k+1}$ be a permutation of $[m]$ for each $k \in [n-1]$, and let $F = (f_2,\ldots,f_n)$. Let $G$ be the graph obtained by identifying $u_1,\ldots,u_n$ as the same vertex $u$ and by identifying $v_1,\ldots,v_n$ as the same vertex $v$. \textbf{The $F$-amalgamated $m$-fold cover of $G$ obtained from $\HH_1,\ldots,\HH_n$} is an $m$-fold cover $\HH = (L,H)$ of $G$ defined as follows. In the special case where $n = 2$, we may also say $f_2$-amalgamated $m$-fold cover of $G$ obtained from $\HH_1$ and $\HH_2$.

For each $i \in [n]$, assume $L_i(x) = \{(x,j): j \in [m]\}$ for each $x \in V(G_i)$. For each $i \in [n]$, rename the vertices in $L_i(v_i)$ so that $(u_i,j)(v_i,j) \in E(H_i)$ for each $j \in [m]$.  Let $L(x) = \{(x,j) : j \in [m] \}$ for each $x \in V(G)$.  For each $i \in [n]$, let $X_i$ be the set of edges in $H_i$ that are incident with at least one element in $L_i(u_i) \cup L_i(v_i)$. Construct edges in $H$ so that $H[L(u)]$ and $H[L(v)]$ are cliques and $H$ contains the edges in $\bigcup_{i=1}^{n} (E(H_i) - X_i)$. Then for each $j \in [m]$ and $k \in [n-1]$, whenever $(x,r)(u_1,j) \in E(H_1)$ where $(x,r) \notin L_1(u_1) \cup L_1(v_1)$ or $(x,r)(u_{k+1},f_{k+1}(j)) \in E(H_{k+1})$ where $(x,r) \notin L_{k+1}(u_{k+1}) \cup L_{k+1}(v_{k+1})$, construct the edge $(x,r)(u,j)$ in $H$. For each $j \in [m]$ and $k \in [n-1]$, whenever $(x,r)(v_1,j) \in E(H_1)$ where $(x,r) \notin L_1(u_1) \cup L_1(v_1)$ or $(x,r)(v_{k+1},f_{k+1}(j)) \in E(H_{k+1})$ where $(x,r) \notin L_{k+1}(u_{k+1}) \cup L_{k+1}(v_{k+1})$, construct the edge $(x,r)(v,j)$ in $H$. Finally, for each $j \in [m]$, construct the edge $(u,j)(v,j)$ in $H$.
\end{defn}

Although the details of this definition are unavoidably technical, the underlying idea is simple. The $F$-amalgamated cover $\HH = (L,H)$ for $G$ is obtained by using $F$ to identify one edge from each of $E_{H_1}(L_1(u_1), L_1(v_1))$, $E_{H_2}(L_2(u_2), L_2(v_2))$,..., $E_{H_n}(L_n(u_n), L_n(v_n))$ as a single edge in $H$ a total of $m$ times. Then, the original covers are preserved on the portion of each graph that excludes the glued edge. 

Before proving Theorem~\ref{thm: upperbound2}, we need a lemma.

\begin{lem}\label{lem: upperGen2}
Suppose that $G_1$ and $G_2$ are vertex disjoint graphs. Suppose that $m \in \N$ and that $\HH_i = (L_i,H_i)$ is an $m$-fold cover for $G_i$ for each $i \in [2]$. Suppose that $u_iv_i \in E(G_i)$ for each $i \in [2]$. Suppose $f$ is a permutation of $[m]$. Let $G$ be the graph obtained by identifying $u_1$ and $u_2$ as the same vertex $u$ and identifying $v_1$ and $v_2$ as the same vertex $v$, $\HH$ be the $f$-amalgamated $m$-fold cover of $G$ obtained from $\HH_1$ and $\HH_2$, and $D = \sum_{j_2=1}^{m} \sum_{j_1=1}^{m} N(\{(u_1,j_1),(v_1,j_2)\},\HH_1) N(\{(u_2,f(j_1)),(v_2,f(j_2))\},\HH_2)$. Then $P_{DP}(G,\HH) = D$ which implies $P_{DP}(G,m) \leq D$.
\end{lem}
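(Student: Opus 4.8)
The plan is to establish the identity $P_{DP}(G,\HH) = D$ by setting up a bijection (or rather an exact counting correspondence) between $\HH$-colorings of $G$ and pairs of ``partial'' $\HH_i$-colorings of $G_1$ and $G_2$ that agree on the glued edge $uv$ after relabeling via $f$. The key observation is that an $\HH$-coloring $I$ of $G$, being an independent set of size $|V(G)|$ picking one vertex from each part, selects some $(u,j_1) \in L(u)$ and some $(v,j_2) \in L(v)$. Since $(u,j)(v,j) \in E(H)$ for each $j$ (the glued edge is a canonical matching by construction), independence forces $j_1 \neq j_2$ unless... actually wait, we need $j_1$ and $j_2$ such that $(u,j_1)(v,j_2) \notin E(H)$; by Definition~\ref{defn: edge amalgamation} the only cross-edges between $L(u)$ and $L(v)$ are the canonical ones $(u,j)(v,j)$, so we need $j_1 \neq j_2$, but in any case the value of $D$ already accounts for this since $N(\{(u_1,j_1),(v_1,j_2)\},\HH_1)=0$ whenever $(u_1,j_1)(v_1,j_2)$ is an edge of $H_1$, and after the renaming in the definition that edge is precisely $(u_1,j)(v_1,j)$.

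First I would fix notation: write an $\HH$-coloring $I$ as $I = I^{(1)} \cup I^{(2)}$ where $I^{(1)} = I \cap (\bigcup_{x \in V(G_1)} L(x))$ and $I^{(2)} = I \cap (\bigcup_{x \in V(G_2)} L(x))$, noting these overlap exactly in $I \cap (L(u) \cup L(v)) = \{(u,j_1),(v,j_2)\}$. The crux is: $I$ is an $\HH$-coloring of $G$ if and only if $I^{(1)}$ is an $\HH_1$-coloring of $G_1$ containing $\{(u_1,j_1),(v_1,j_2)\}$ and $I^{(2)}$, after applying $f$ to the indices at $u$ and $v$, is an $\HH_2$-coloring of $G_2$ containing $\{(u_2,f(j_1)),(v_2,f(j_2))\}$. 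The forward direction uses that $H$ restricted to the $G_1$-part (minus the cross-edges at $u,v$ coming from $G_2$) is isomorphic to $H_1$ by construction — this is essentially the content of how the amalgamated cover was built, taking $\bigcup_{i}(E(H_i)-X_i)$ and then re-adding the edges at $u,v$ indexed appropriately — so an independent set in $H$ restricts to an independent set in $H_1$; similarly for $G_2$ but with the $f$-relabeling because the edges at $u_2,v_2$ in $H_2$ were attached to $L(u),L(v)$ via $f_2 = f$. The reverse direction: given compatible $\HH_1$- and $\HH_2$-colorings, their union is independent in $H$ because any edge of $H$ lies either in $E(H_1)-X_1$, in $E(H_2)-X_2$, or is one of the cross-edges at $u$ or $v$, and in each case one of the two colorings certifies the non-edge condition.

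Once this correspondence is in place, I would sum over the choice of $j_1,j_2$: for fixed $(j_1,j_2)$, the number of $\HH$-colorings $I$ with $(u,j_1),(v,j_2) \in I$ equals (number of $\HH_1$-colorings of $G_1$ containing $\{(u_1,j_1),(v_1,j_2)\}$) times (number of $\HH_2$-colorings of $G_2$ containing $\{(u_2,f(j_1)),(v_2,f(j_2))\}$), since the choices on $V(G_1)\setminus\{u,v\}$ and $V(G_2)\setminus\{u,v\}$ are made independently — there are no edges of $H$ between $\bigcup_{x\in V(G_1)\setminus\{u,v\}}L(x)$ and $\bigcup_{x\in V(G_2)\setminus\{u,v\}}L(x)$ by requirement (3) in the definition of a cover together with the fact that such vertices are non-adjacent in $G$. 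That product is exactly $N(\{(u_1,j_1),(v_1,j_2)\},\HH_1)\, N(\{(u_2,f(j_1)),(v_2,f(j_2))\},\HH_2)$. Summing over all $j_1, j_2 \in [m]$ gives $P_{DP}(G,\HH) = D$, and then $P_{DP}(G,m) \le P_{DP}(G,\HH) = D$ is immediate from the definition of the DP color function.

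I expect the main obstacle to be the bookkeeping in the forward/reverse correspondence — specifically, carefully verifying from Definition~\ref{defn: edge amalgamation} that the cross-edges of $H$ incident to $L(u)\cup L(v)$ are exactly the ``images'' under the (identity on $H_1$, $f$ on $H_2$) identification of the corresponding cross-edges in $H_1$ and $H_2$, with no spurious extra edges and none missing, and that the canonical matching $(u,j)(v,j)$ is consistent with the renaming ``$(u_i,j)(v_i,j)\in E(H_i)$'' performed at the start of the definition. This is where one must be meticulous rather than clever; the rest is a routine double-counting argument. I would present it by first stating and proving the if-and-only-if characterization of $\HH$-colorings as a sub-claim, then doing the summation.
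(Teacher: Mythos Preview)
Your proposal is correct and follows essentially the same approach as the paper's proof: fix $(j_1,j_2)$, set up a bijection between $\HH$-colorings of $G$ containing $\{(u,j_1),(v,j_2)\}$ and pairs $(I_1,I_2)$ of $\HH_1$- and $\HH_2$-colorings containing $\{(u_1,j_1),(v_1,j_2)\}$ and $\{(u_2,f(j_1)),(v_2,f(j_2))\}$ respectively, then sum over $(j_1,j_2)$. The paper is simply terser, asserting that the map $g((I_1,I_2)) = (I_1 - P_1) \cup (I_2 - P_2) \cup P$ is ``easy to check'' to be a bijection, whereas you spell out the edge-by-edge verification from Definition~\ref{defn: edge amalgamation}.
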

\begin{proof}
Note $P_{DP}(G,\HH) = \sum_{j=1}^{m}\sum_{i=1}^m N(\{(u,i),(v,j)\},\HH)$ and $N(\{(u,i),(v,j)\},\HH) = 0$ when $(u,i)(v,j) \in E(H)$. Let $P = \{(u,j_1),(v,j_2)\}$ where $j_1,j_2$ are fixed elements of $[m]$ and $(u,j_1)(v,j_2) \notin E(H)$. We will determine $N(P,\HH)$. Let $P_1 = \{(u_1,j_1),(v_1,j_2)\}$ and $P_2 = \{(u_1,f(j_1)),(v_1,f(j_2))\}$. Let $\II_1$ be the set of all $\HH_1$-colorings of $G_1$ that contain $P_1$, $\II_2$ be the set of all $\HH_2$-colorings of $G_2$ that contain $P_2$, and $\II$ be the set of all $\HH$-colorings of $G$ that contain $P$. Let $g: \II_1 \times \II_2 \rightarrow \II$ be the function given by $g((I_1,I_2)) = (I_1 - P_1) \cup (I_2 - P_2) \cup P$. It is easy to check that $(I_1 - P_1) \cup (I_2 - P_2) \cup P$ is an independent set of size $|V(G)|$ in $H$. Also, $g$ is a bijection. As such, $N(P,\HH) = |\II| = |\II_1||\II_2| = N(P_1,\HH_1)N(P_2,\HH_2)$. Therefore, $P_{DP}(G,\HH) = D$ which implies that $P_{DP}(G,m) \leq D$.
\end{proof}

We are now ready to prove Theorem~\ref{thm: upperbound2}.

\begin{proof}
The proof is by induction on $n$. We begin by proving the result for $n = 2$. Suppose $\HH_i = (L_i,H_i)$ is a full $m$-fold cover of $G_i$ such that $P_{DP}(G_i,\HH_i) = P_{DP}(G_i,m)$, $L_i(u_i) = \{(u_i,j) : j \in [m]\}$, and $L_i(v_i) = \{(v_i,j) : j \in [m]\}$ for each $i \in [2]$. Assume $(v_i,j)(u_i,j) \in E(H_i)$ for each $i \in [2]$ and $j \in [m]$. Let $a_{j_1,j_2} = N(\{(v_1,j_1),(u_1,j_2)\},\HH_1)$, $b_{j_1,j_2} = N(\{(v_2,j_1),(u_2,j_2)\},\HH_2)$, $X = [m]$, and $D_{\sigma} = \sum_{j_2=1}^{m} \sum_{j_1=1}^{m} a_{j_1,j_2}b_{\sigma(j_1),\sigma(j_2)}$ where $\sigma \in S_X$ and $S_X$ is the symmetric group on $X$. By Lemma~\ref{lem: upperGen2}, we know that $P_{DP}(G,m) \leq D_{\sigma}$ for each $\sigma \in S_X$.
Notice $P_{DP}(G_1,m) = \sum_{j_2=1}^{m} \sum_{j_1=1}^{m} a_{j_1,j_2}$, $P_{DP}(G_2,m) = \sum_{j_2=1}^{m} \sum_{j_1=1}^{m} b_{j_1,j_2}$, and $a_{j,j} = b_{j,j} = 0$ for each $j \in X$. Suppose $q,r,j_1,j_2 \in X$ and $q \not= r$. Notice that $b_{q,r}$ occurs $(m-2)!$ times in $\sum_{\sigma \in S_X} b_{\sigma(j_1),\sigma(j_2)}$ which means $\sum_{\sigma \in S_X} b_{\sigma(j_1),\sigma(j_2)} = (m-2)!P_{DP}(G_2,m)$.
Thus,
\begin{align*}
    \sum_{\sigma \in S_X} D_{\sigma} = \sum_{\sigma \in S_X} \sum_{j_2=1}^{m} \sum_{j_1=1}^{m} a_{j_1,j_2}b_{\sigma(j_1),\sigma(j_2)} &= \sum_{j_2=1}^{m} \sum_{j_1=1}^{m} a_{j_1,j_2} \sum_{\sigma \in S_X} b_{\sigma(j_1),\sigma(j_2)}\\
    &= P_{DP}(G_1,m)(m-2)!P_{DP}(G_2,m).
\end{align*}
So, there is a $\sigma' \in S_X$ such that $$P_{DP}(G,m) \leq D_{\sigma'} \leq \frac{\sum_{\sigma \in S_X} D_{\sigma}}{m!} = \frac{P_{DP}(G_1,m)P_{DP}(G_2,m)}{m(m-1)}.$$

Now suppose $n \geq 3$ and the result holds for all natural numbers greater than $1$ and less than $n$. Let $G'$ be the graph obtained by identifying $u_1,\ldots,u_{n-1}$ as the same vertex $u'$ and $v_1,\ldots,v_{n-1}$ as the same vertex $v'$. By the inductive hypothesis, for each $m \in \N$, $$P_{DP}(G',m) \leq \frac{\prod_{i=1}^{n-1} P_{DP}(G_i,m)}{(m(m-1))^{n-2}}.$$ Notice $G$ is the graph obtained by identifying $u'$ and $u_n$ as the same vertex $u$ and $v'$ and $v_n$ as the same vertex $v$. Thus, by the inductive hypothesis, $$P_{DP}(G,m) \leq \frac{P_{DP}(G',m)P_{DP}(G_n,m)}{m(m-1)} \leq \frac{\prod_{i=1}^{n} P_{DP}(G_i,m)}{(m(m-1))^{n-1}}.$$
\end{proof}

We now wish to present an application of the ideas presented in this section.  Specifically, we will use the ideas developed in this section to determine the DP color function of all chorded cycles.  First, we mention a result from~\cite{BH21}.

\begin{pro} [\cite{BH21}] \label{cor: lowerGenPretty} 
Suppose that $G_1,\ldots,G_n$ are vertex disjoint graphs where $n \geq 2$ and $G \in \bigoplus_{i=1}^{n} (G_i,p)$ where for each $i \in [n]$, $\{u_{i,1},\ldots,u_{i,p}\}$ is a clique in $G_i$ and $G$ is obtained by identifying $u_{1,q},\ldots,u_{n,q}$ as the same vertex $u_q$ for each $q \in [p]$. Suppose that for each $i \in [n]$, given any $m$-fold cover $\DD_i = (L_i,D_i)$ of $G_i$, $N(A,\DD_i) \geq P_{DP}(G_i,m) / \prod_{i=0}^{p-1} (m-i)$ whenever $A \subseteq \bigcup_{q=1}^{p} L(u_{i,q})$, $|A \cap L(u_{i,q})| = 1$ for each $q \in [p]$, and $A$ is an independent set in $D_i$. Then $$P_{DP}(G,m) \geq \frac{\prod_{i=1}^{n} P_{DP}(G_i,m)}{\left(\prod_{i=0}^{n-1} (m-i)\right)^{n-1}}.$$
\end{pro}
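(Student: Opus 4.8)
The plan is to bound $P_{DP}(G,\HH)$ from below for an \emph{arbitrary} $m$-fold cover $\HH = (L,H)$ of $G$ and then take the minimum over all such $\HH$; we may assume $m \geq p$, since otherwise the denominators that appear are not positive. Write $K = \{u_1,\ldots,u_p\}$ for the glued clique, and for each $i \in [n]$ let $\HH_i = (L_i,H_i)$ be the separated cover of $G_i$ obtained from $\HH$ as in Definition~\ref{defn: separation}; let $f_i$ denote the associated isomorphism, which carries $H_i'=H[(\bigcup_{x \in V(G_i) - \{u_{i,q}:q\in[p]\}} L(x)) \cup \bigcup_{q=1}^{p} L(u_q)]$ onto $H_i$, sends $(u_q,j)$ to $(u_{i,q},j)$, and fixes every other vertex of $H_i'$.

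First I would partition the $\HH$-colorings of $G$ according to the colors used on $K$. Let $\mathcal{A}$ be the set of $\HH_K$-colorings of $G[K]$, where $\HH_K$ is the subcover induced by $K$; that is, $\mathcal{A}$ consists of the independent sets $A \subseteq \bigcup_{q=1}^{p} L(u_q)$ of $H$ with $\abs{A \cap L(u_q)} = 1$ for every $q \in [p]$. Every $\HH$-coloring of $G$ restricts to exactly one member of $\mathcal{A}$, so $P_{DP}(G,\HH) = \sum_{A \in \mathcal{A}} N(A,\HH)$. Since $G[K]=K_p$ is chordal, $\abs{\mathcal{A}} = P_{DP}(K_p,\HH_K) \geq P_{DP}(K_p,m) = P(K_p,m) = \prod_{i=0}^{p-1}(m-i)$.

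The heart of the argument is a product formula: for each fixed $A \in \mathcal{A}$,
\[
N(A,\HH) = \prod_{i=1}^{n} N(A_i,\HH_i), \qquad A_i := f_i(A) \subseteq \textstyle\bigcup_{q=1}^{p} L_i(u_{i,q}).
\]
This is the $K_p$-gluing, $n$-factor generalization of the bijection in the proof of Lemma~\ref{lem: upperGen2}, and I expect it to be the main obstacle. To establish it I would show that $(I_1,\ldots,I_n) \mapsto A \cup \bigcup_{i=1}^{n}\bigl(f_i^{-1}(I_i) - A\bigr)$ is a bijection from $\prod_{i=1}^{n}\{\,\HH_i\text{-colorings of }G_i\text{ containing }A_i\,\}$ onto $\{\,\HH\text{-colorings of }G\text{ containing }A\,\}$. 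The key points: the parts $L(x)$ with $x \notin K$ are pairwise disjoint across the $G_i$ and are left untouched by separation, so the union picks exactly one vertex from each part of $G$ and contains $A$; and every cross-edge of $H$ lies among the parts associated with the vertices of a single $G_i$ (hence is recorded inside $H_i'$, and so inside $H_i$ via $f_i$), so independence in each $H_i$ is equivalent to independence in $H$. Edges inside $K$ belong to every $H_i'$ at once, but they cause no trouble since $A$ is already independent; the inverse map is simply $I \mapsto (f_1(I \cap V(H_1')),\ldots,f_n(I \cap V(H_n')))$.

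Finally I would assemble the pieces. Each $\HH_i$ is an $m$-fold cover of $G_i$ and $A_i$ is an independent set of $H_i$ meeting each $L_i(u_{i,q})$ in exactly one vertex, so the hypothesis gives $N(A_i,\HH_i) \geq P_{DP}(G_i,m)/\prod_{i=0}^{p-1}(m-i)$. Combining with the product formula and the bound on $\abs{\mathcal{A}}$,
\[
P_{DP}(G,\HH) = \sum_{A \in \mathcal{A}} \prod_{i=1}^{n} N(A_i,\HH_i) \geq \abs{\mathcal{A}} \cdot \frac{\prod_{i=1}^{n} P_{DP}(G_i,m)}{\bigl(\prod_{i=0}^{p-1}(m-i)\bigr)^{n}} \geq \frac{\prod_{i=1}^{n} P_{DP}(G_i,m)}{\bigl(\prod_{i=0}^{p-1}(m-i)\bigr)^{n-1}}.
\]
Since $\HH$ was arbitrary, this yields the claimed lower bound on $P_{DP}(G,m)$. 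Apart from the bijection underlying the product formula, every step is routine bookkeeping together with the chordality of $K_p$.
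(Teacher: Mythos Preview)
Your argument is correct. The paper does not actually supply a proof of this proposition---it is quoted from~\cite{BH21}---so there is no in-paper proof to compare against; your approach (separate the given cover into covers of the $G_i$, partition $\HH$-colorings by their restriction to the glued clique, establish the product formula $N(A,\HH)=\prod_i N(A_i,\HH_i)$ via the obvious bijection, apply the hypothesis factorwise, and use $|\mathcal{A}|\ge P_{DP}(K_p,m)=\prod_{i=0}^{p-1}(m-i)$) is exactly the natural one and matches the spirit of the analogous arguments the paper does give (e.g.\ Lemma~\ref{lem: upperGen2} and Lemma~\ref{lem: upperGenFixed}).
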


Now, we have a corollary that follows immediately from Proposition~\ref{cor: lowerGenPretty} and Theorem~\ref{thm: upperbound2}.

\begin{cor}\label{thm: general2}
Suppose that $G_1,\ldots,G_n$ are vertex disjoint graphs for some $n \geq 2$ with $u_iv_i \in E(G_i)$ for each $i \in [n]$. Suppose that $G$ is the graph obtained by identifying $u_1,\ldots,u_n$ as the same vertex $u$ and $v_1,\ldots,v_n$ as the same vertex $v$. Also suppose that for each $i \in [n]$ and any $m$-fold cover $\HH_i = (L_i,H_i)$ of $G_i$ with $m \geq 2$, $N(\{p_1,p_2\},\HH_i) \geq P_{DP}(G_i,m)/(m(m-1))$ for each $p_1 \in L_i(u_i)$ and $p_2 \in L_i(v_i)$ where $p_1p_2 \notin E(H_i)$. Then $$P_{DP}(G,m) = \frac{\prod_{i=1}^{n} P_{DP}(G_i,m)}{(m(m-1))^{n-1}}.$$
\end{cor}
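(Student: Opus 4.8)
The plan is to show that the two bounds already in hand — the upper bound from Theorem~\ref{thm: upperbound2} and the lower bound from Proposition~\ref{cor: lowerGenPretty} — pinch together to give the claimed equality, so essentially all of the work is in checking that the hypotheses of Proposition~\ref{cor: lowerGenPretty} are satisfied. First I would observe that Theorem~\ref{thm: upperbound2} applies verbatim to the graphs $G_1,\ldots,G_n$ and the edge-gluing $G$ described here (this is exactly the setup of that theorem with $p=2$), giving
$$P_{DP}(G,m) \leq \frac{\prod_{i=1}^{n} P_{DP}(G_i,m)}{(m(m-1))^{n-1}}$$
for all $m \geq 2$.

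Next I would verify the lower bound. Proposition~\ref{cor: lowerGenPretty} with $p=2$ requires that for each $i \in [n]$ and each $m$-fold cover $\DD_i = (L_i,D_i)$ of $G_i$, one has $N(A,\DD_i) \geq P_{DP}(G_i,m)/(m(m-1))$ whenever $A \subseteq L_i(u_i) \cup L_i(v_i)$ is an independent set in $D_i$ with $|A \cap L_i(u_i)| = |A \cap L_i(v_i)| = 1$. But such an $A$ is precisely a set $\{p_1,p_2\}$ with $p_1 \in L_i(u_i)$, $p_2 \in L_i(v_i)$, and $p_1 p_2 \notin E(D_i)$ (the independence condition, since $H[L_i(u_i)]$ and $H[L_i(v_i)]$ are cliques and the only possible edges between the two parts form a matching). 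This is exactly the hypothesis we have assumed on each $G_i$. Hence Proposition~\ref{cor: lowerGenPretty} applies and yields
$$P_{DP}(G,m) \geq \frac{\prod_{i=1}^{n} P_{DP}(G_i,m)}{(m(m-1))^{n-1}}$$
for all $m \geq 2$ (noting that when $p=2$ the denominator $\left(\prod_{i=0}^{p-1}(m-i)\right)^{n-1}$ in Proposition~\ref{cor: lowerGenPretty} is $(m(m-1))^{n-1}$).

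Combining the two inequalities gives the desired equality for all $m \geq 2$. For completeness I would also dispatch the case $m = 1$ separately if needed, though since $G$ contains an edge (indeed $uv \in E(G)$), both sides are $0$ there and the statement is trivial — and in any case the hypotheses and the conclusion are all phrased for $m \geq 2$, so nothing more is required. The only subtlety worth a sentence is the translation between the "independent set $A$ meeting each $L_i(u_{i,q})$ once" language of Proposition~\ref{cor: lowerGenPretty} and the "$p_1 \in L_i(u_i)$, $p_2 \in L_i(v_i)$, $p_1p_2 \notin E(H_i)$" language here; there is no real obstacle, but it is the one place where one must be careful that the two formulations coincide. I do not anticipate any genuine difficulty — the corollary is a formal consequence of the two cited results, and the proof is essentially "apply Theorem~\ref{thm: upperbound2}, apply Proposition~\ref{cor: lowerGenPretty}, combine."
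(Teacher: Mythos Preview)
Your proposal is correct and matches the paper's own approach exactly: the paper states that the corollary ``follows immediately from Proposition~\ref{cor: lowerGenPretty} and Theorem~\ref{thm: upperbound2},'' and your write-up simply unpacks that sentence, including the routine check that the independent-set hypothesis of Proposition~\ref{cor: lowerGenPretty} with $p=2$ coincides with the $p_1p_2\notin E(H_i)$ hypothesis here. There is nothing to add.
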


We now show that the hypothesis of Corollary~\ref{thm: general2} is satisfied when $G_i$ is an odd cycle.  This will ultimately allow us to determine the DP color function of any chorded cycle that can be constructed by edge-gluing two odd cycles.

\begin{lem}\label{lem: lowerOddCyc2}
Suppose that $G = C_{2k+1}$ where $k \in \N$. Also suppose that $\HH = (L,H)$ is an arbitrary $m$-fold cover of $G$ where $m \geq 2$. For each $uv \in E(G)$, whenever $p_1 \in L(u)$, $p_2 \in L(v)$, and $p_1p_2 \notin E(H)$, $N(\{p_1,p_2\},\HH) \geq P_{DP}(G,m)/(m(m-1))$.
\end{lem}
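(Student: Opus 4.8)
The plan is to fix an arbitrary $m$-fold cover $\HH = (L,H)$ of $G = C_{2k+1}$, fix an edge $uv \in E(G)$ and a nonadjacent pair $p_1 \in L(u)$, $p_2 \in L(v)$, and directly produce a cover $\HH^*$ of $G$ together with a pair $p_1^* \in L^*(u)$, $p_2^* \in L^*(v)$ such that $N(\{p_1,p_2\},\HH) \geq N(\{p_1^*,p_2^*\},\HH^*)$ and such that the right-hand count is easy to control. The natural target is a cover obtained by deleting the edge $uv$ from $G$: removing $uv$ turns $C_{2k+1}$ into a path $P$ on $2k+1$ vertices, which is a tree, so by Proposition~\ref{pro: tree} any full $m$-fold cover of $P$ has a canonical labeling and hence exactly $m(m-1)^{2k}$ colorings, distributed so that each pair $(p_1,p_2)$ with $p_1 \in L(u)$, $p_2 \in L(v)$, is ``simultaneously extendable'' in a predictable way. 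The key point is then to argue that $N(\{p_1,p_2\},\HH)$, where the edge $uv$ is present, is bounded below by the number of colorings of $P$ that extend a canonically-chosen pair and avoid the obstruction from $uv$, and to combine this with the known value $P_{DP}(C_{2k+1},m) = P(C_{2k+1},m) = (m-1)^{2k+1} + (m-1)$ from~\cite{KM19}.

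More concretely, here are the steps in order. First I would let $P$ be the path $G - uv$ and let $\HH_P$ be the subcover of $\HH$ corresponding to $P$ (in the notation of the excerpt, delete the matching $E_H(L(u),L(v))$); if $\HH_P$ is not full, add edges to make it full, which only decreases the number of colorings, so it suffices to bound $N$ for a full cover of $P$. Second, apply Proposition~\ref{pro: tree} to give $\HH_P$ a canonical labeling; relabel so that $p_1 = (u,1)$ and count: the $\HH_P$-colorings of $P$ containing $(u,1)$ are exactly the proper colorings of the path $P$ (rooted at $u$) using color $1$ at $u$, of which there are $(m-1)^{2k}$, and among these, for each choice of the color at $v$ the number is the same for all $m-1$ ``wrong'' colors at $v$ once we account for the far end — more carefully, I would compute, for each $c \in [m]$, the quantity $N(\{(u,1),(v,c)\},\HH_P)$ using the transfer-matrix / walk-counting structure of a path and the canonical labeling, getting a value that is $(m-1)^{2k-1} + (\text{small correction})$ depending on whether $c=1$. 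Third, relate this back to $\HH$: an $\HH$-coloring containing $\{p_1,p_2\}$ is precisely an $\HH_P$-coloring containing $\{p_1,p_2\}$ together with the single constraint that $p_1 p_2 \notin E(H)$ — which holds by hypothesis — so $N(\{p_1,p_2\},\HH) = N(\{p_1,p_2\},\HH_P)$ for the chosen full extension, hence $\geq$ the value just computed. Fourth, verify the arithmetic inequality $N(\{p_1,p_2\},\HH_P) \geq \left((m-1)^{2k+1} + (m-1)\right)/(m(m-1)) = \left((m-1)^{2k} + 1\right)/m$, which is a clean polynomial comparison.

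The main obstacle I anticipate is the third-to-last step: the exact walk-count $N(\{(u,1),(v,c)\},\HH_P)$ on a path of odd length with a canonical labeling, and checking that its minimum over valid $c$ (the $c$ with $(u,1)(v,c) \notin E(H)$, which under a canonical labeling of the full cover $\HH_P$ means $c \neq 1$, but $v$ and $u$ need not be adjacent in $P$ so actually all $c$ are valid for $\HH_P$ and the constraint $c \neq 1$ only enters through $\HH$) is still at least $\left((m-1)^{2k}+1\right)/m$. Concretely, for two vertices at the two ends of a path of $2k$ edges, the number of proper colorings of the path with prescribed colors at both ends is $\frac{1}{m}\left((m-1)^{2k} + (m-1)(-1)^{2k}\right)$ when the prescribed colors agree and $\frac{1}{m}\left((m-1)^{2k} - (-1)^{2k}\right) = \frac{1}{m}\left((m-1)^{2k} - 1\right)$ when they differ — wait, $u$ and $v$ are the two endpoints of the edge we deleted, so in $P$ they are at distance $2k$, the two ends of the path, and this is exactly the closed-form I would use. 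Since the constraint from $\HH$ forbids the ``agree'' case only when $p_1 p_2$ happens to be a cross-edge, but the hypothesis guarantees $p_1 p_2 \notin E(H)$, I would need to handle whichever case $p_2$ falls into: if $p_2$ corresponds to color $1$ under the canonical labeling then $p_1 p_2 \in E(H)$ contradicting the hypothesis, so in fact $p_2$ corresponds to a color $\neq 1$ and we are always in the ``differ'' case, giving $N(\{p_1,p_2\},\HH_P) \geq \frac{1}{m}\left((m-1)^{2k} - 1\right)$ — which is slightly \emph{less} than the target $\frac{1}{m}\left((m-1)^{2k}+1\right)$, so this crude bound is not quite enough and the real work is to recover the extra $\frac{2}{m}$: this must come from not throwing away the edge $uv$ for free, i.e.\ from counting $\HH$-colorings rather than passing all the way to $\HH_P$. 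I would therefore refine step three: directly count $\HH$-colorings of $C_{2k+1}$ containing $\{p_1,p_2\}$ by going around the cycle, using that the cover restricted to the path is full (after adding edges) and canonically labeled, and that the extra matching edge on $uv$ either is or is not ``consistent'' with the canonical labeling; a short case analysis on the $m$ matching edges of $E_H(L(u),L(v))$, summed against the path counts, should yield exactly $N(\{p_1,p_2\},\HH) \geq \frac{(m-1)^{2k}+1}{m}$, completing the proof. This transfer-matrix bookkeeping on the cycle, and making the edge-addition step rigorous so that the inequality goes the right way, is where the real effort lies; everything else is routine.
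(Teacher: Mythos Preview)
Your overall strategy---delete the edge $uv$, pass to the path $P = G - uv$, canonically label the resulting cover $\HH_P$ via Proposition~\ref{pro: tree}, and reduce to counting proper colorings of a path with prescribed endpoint colors---is exactly the paper's approach. The observation $N(\{p_1,p_2\},\HH) = N(\{p_1,p_2\},\HH_P)$ (since $p_1p_2 \notin E(H)$ and those are the only edges removed) is also what the paper uses. So the skeleton is right; the perceived difficulty at the end is an artifact of two fixable errors.

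First, a sign slip: for an odd cycle, $P(C_{2k+1},m) = (m-1)^{2k+1} + (-1)^{2k+1}(m-1) = (m-1)^{2k+1} - (m-1)$, not $+(m-1)$. Hence the target is
\[
\frac{P_{DP}(C_{2k+1},m)}{m(m-1)} \;=\; \frac{(m-1)^{2k+1}-(m-1)}{m(m-1)} \;=\; \frac{(m-1)^{2k}-1}{m},
\]
which your ``differ'' count $\tfrac{1}{m}\bigl((m-1)^{2k}-1\bigr)$ meets \emph{with equality}. There is no missing $2/m$ to recover, and the proposed extra case analysis on the matching $E_H(L(u),L(v))$ is unnecessary.

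Second, a conceptual slip: the canonical labeling lives on $\HH_P$, and in $P$ the vertices $u$ and $v$ are \emph{not adjacent} (they are the two ends of the path). So the labeling tells you nothing about $E_H(L(u),L(v))$; in particular, $p_1=(u,1)$ and $p_2=(v,1)$ under the $\HH_P$-labeling does \emph{not} force $p_1p_2 \in E(H)$. Both the ``agree'' case ($j_1=j_2$) and the ``differ'' case ($j_1\neq j_2$) are possible subject to $p_1p_2 \notin E(H)$, and both must be checked. Fortunately the ``agree'' case gives $\tfrac{1}{m}\bigl((m-1)^{2k}+(m-1)\bigr)$, which strictly exceeds the target, so this case is easy. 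The paper handles it by interpreting that count as $P(C_{2k},m)/m$ via contracting the edge $uv$. With these two corrections your argument is complete and coincides with the paper's proof.
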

\begin{proof}
Suppose that the vertices of $G$ in cyclic order are $s_1,\ldots,s_{2k+1}$. We can assume that $\HH$ is full.  Let $G' = G - \{s_1s_2\}$. Since $G'$ is a tree, the subcover of $\HH$ corresponding to $G'$, $\HH' = (L,H')$, has a canonical labeling by Proposition~\ref{pro: tree}.  Suppose the vertices of $H$ are named according to this canonical labeling.  Let $G'' = G \cdot s_1s_2$ where the vertex obtained from contracting $s_1s_2$ is $s'$.~\footnote{When $k=1$, we ignore the multiple edge so that $G'' = P_2$. Throughout this paper, we assume that $C_2 = P_2$. Notice that $P(C_2,m) = (m-1)^2 + (m-1) = m(m-1) = P(P_2,m)$.} Suppose $p_1 = (s_1,j_1)$, $p_2 = (s_2,j_2)$, and $p_1p_2 \notin E(H)$. Notice that $N(\{p_1,p_2\},\HH) = N(\{p_1,p_2\},\HH')$. We will show that $N(\{p_1,p_2\},\HH') \geq P_{DP}(G,m)/(m(m-1))$ for each of the following cases: (1) $j_1 \not= j_2$ and (2) $j_1 = j_2$.

In case (1), assume that $j_1 \not= j_2$. Notice $N(\{p_1,p_2\},\HH')$ is the number of proper $m$-colorings of $G$ that color $s_1$ with $j_1$ and $s_2$ with $j_2$. Thus, $$N(\{p_1,p_2\},\HH') = \frac{P(C_{2k+1},m)}{m(m-1)} = \frac{P_{DP}(C_{2k+1},m)}{m(m-1)}.$$
In case (2), assume that $j_1 = j_2 = j$. Notice $N(\{p_1,p_2\},\HH')$ is the number of proper $m$-colorings of $G''$ that color $s'$ with $j$. Thus, $$N(\{p_1,p_2\},\HH') = \frac{P(C_{2k},m)}{m} = \frac{(m-1)^{2k+1} + (m-1)^2}{m(m-1)} > \frac{P_{DP}(C_{2k+1},m)}{m(m-1)}.$$
\end{proof}

The next result easily follows from Corollary~\ref{thm: general2} and Lemma~\ref{lem: lowerOddCyc2}.
\begin{cor}\label{pro: oddCycleEdge}
Suppose that $G_1,\ldots,G_n$ are vertex disjoint odd cycles with $u_iv_i \in E(G_i)$ for each $i \in [n]$. Suppose that $G$ is the graph obtained by identifying $u_1,\ldots,u_n$ as the same vertex $u$ and by identifying $v_1,\ldots,v_n$ as the same vertex $v$. Then for each $m \geq 2$, $$P_{DP}(G,m) = \frac{\prod_{i=1}^{n} P_{DP}(G_i,m)}{(m(m-1))^{n-1}}.$$
\end{cor}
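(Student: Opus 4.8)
The plan is to verify that the hypotheses of Corollary~\ref{thm: general2} are met for a family of vertex-disjoint odd cycles, and then simply invoke that corollary together with Lemma~\ref{lem: lowerOddCyc2}. Concretely, let $G_1, \ldots, G_n$ be vertex disjoint odd cycles with distinguished edges $u_i v_i \in E(G_i)$, and let $G$ be an edge-gluing of them along these edges. Corollary~\ref{thm: general2} asserts exactly the desired conclusion, $P_{DP}(G,m) = \prod_{i=1}^n P_{DP}(G_i,m)/(m(m-1))^{n-1}$, provided that each $G_i$ satisfies the local cover-counting hypothesis: for every $m$-fold cover $\HH_i = (L_i,H_i)$ of $G_i$ with $m \geq 2$, and every non-adjacent pair $p_1 \in L_i(u_i)$, $p_2 \in L_i(v_i)$, one has $N(\{p_1,p_2\},\HH_i) \geq P_{DP}(G_i,m)/(m(m-1))$.

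The only work, then, is to observe that this hypothesis is precisely the content of Lemma~\ref{lem: lowerOddCyc2}. That lemma is stated for a single odd cycle $G = C_{2k+1}$ and an arbitrary $m$-fold cover $\HH = (L,H)$ with $m \geq 2$, and it establishes that whenever $uv \in E(G)$, $p_1 \in L(u)$, $p_2 \in L(v)$, and $p_1 p_2 \notin E(H)$, we have $N(\{p_1,p_2\},\HH) \geq P_{DP}(G,m)/(m(m-1))$. Since each $G_i$ in the statement is an odd cycle, $G_i = C_{2k_i+1}$ for some $k_i \in \N$, and the edge $u_i v_i$ is an edge of $G_i$, Lemma~\ref{lem: lowerOddCyc2} applied to $G_i$ with the edge $u_i v_i$ gives exactly the inequality $N(\{p_1,p_2\},\HH_i) \geq P_{DP}(G_i,m)/(m(m-1))$ for all non-adjacent $p_1 \in L_i(u_i)$, $p_2 \in L_i(v_i)$.

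So the proof is a two-line assembly: first cite Lemma~\ref{lem: lowerOddCyc2} to confirm that each $G_i$ meets the hypothesis of Corollary~\ref{thm: general2}, and then cite Corollary~\ref{thm: general2} to conclude. There is genuinely no obstacle here — all the real content (the upper bound via amalgamated covers, which is Theorem~\ref{thm: upperbound2}, and the matching lower bound, which is Proposition~\ref{cor: lowerGenPretty} specialized to $p=2$ and combined with Lemma~\ref{lem: lowerOddCyc2}) has already been done in the preceding results, and Corollary~\ref{thm: general2} is itself the clean packaging of those two halves. The one thing I would double-check is the boundary case $k=1$, i.e.\ a triangle $C_3$: Lemma~\ref{lem: lowerOddCyc2} already handles this (its proof explicitly treats $k=1$ via the convention $C_2 = P_2$), so no separate argument is needed. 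If one wanted to be slightly more self-contained, one could also note that $P_{DP}(C_{2k_i+1},m) = P(C_{2k_i+1},m) = (m-1)^{2k_i+1} - (m-1)$, but this is not required for the statement as written.

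\begin{proof}
Fix $m \geq 2$. For each $i \in [n]$ the graph $G_i$ is an odd cycle, so by Lemma~\ref{lem: lowerOddCyc2}, for any $m$-fold cover $\HH_i = (L_i,H_i)$ of $G_i$ and any $p_1 \in L_i(u_i)$, $p_2 \in L_i(v_i)$ with $p_1 p_2 \notin E(H_i)$, we have $N(\{p_1,p_2\},\HH_i) \geq P_{DP}(G_i,m)/(m(m-1))$. Thus the hypothesis of Corollary~\ref{thm: general2} is satisfied, and Corollary~\ref{thm: general2} yields
$$P_{DP}(G,m) = \frac{\prod_{i=1}^{n} P_{DP}(G_i,m)}{(m(m-1))^{n-1}},$$
as desired.
\end{proof}
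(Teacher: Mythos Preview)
Your proposal is correct and matches the paper's approach exactly: the paper simply states that the result ``easily follows from Corollary~\ref{thm: general2} and Lemma~\ref{lem: lowerOddCyc2},'' which is precisely the two-step assembly you carry out.
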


We are now ready to determine the DP color function of all chorded cycles.

\begin{pro} \label{pro: cyclechord}
Suppose that $G_i = C_{n_i}$ where $i \in [2]$ and $n_i \geq 3$. Suppose $u_iv_i \in E(G_i)$ for each $i \in [2]$. Let $G$ be the graph obtained by identifying $u_1$ and $u_2$ as the same vertex $u$ and by identifying $v_1$ and $v_2$ as the same vertex $v$. Then for each $m \geq 3$, if $n_1$ and $n_2$ are even, then $$P_{DP}(G,m) = \frac{1}{m}\left((m-1)^{n_1 + n_2 - 1} - (m-1)^{n_1 - 1} - (m-1)^{n_2 - 1} - m - 1\right).$$ Otherwise, $$P_{DP}(G,m) = \frac{P_{DP}(G_1,m)P_{DP}(G_2,m)}{m(m-1)}.$$
\end{pro}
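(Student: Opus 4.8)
My plan is to split on the parities of $n_1$ and $n_2$. The common tool is that for an arbitrary full $m$-fold cover $\HH=(L,H)$ of $G$ with separated covers $\HH_1=(L_1,H_1)$ and $\HH_2=(L_2,H_2)$ of $G_1,G_2$ (Definition~\ref{defn: separation}),
\begin{equation*}
P_{DP}(G,\HH)=\sum_{(x,y)}N(\{(u_1,x),(v_1,y)\},\HH_1)\,N(\{(u_2,x),(v_2,y)\},\HH_2),
\end{equation*}
the sum being over all $(x,y)\in[m]^2$ with $(u,x)(v,y)\notin E(H)$; this is the ``splitting'' direction of Lemma~\ref{lem: upperGen2} and underlies Proposition~\ref{cor: lowerGenPretty}. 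Since deleting cross-edges from $H$ only increases the number of $\HH$-colorings, it suffices to consider full covers when proving lower bounds.

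Suppose first that at least one of $n_1,n_2$ is odd, say $n_1$. The upper bound $P_{DP}(G,m)\le P_{DP}(G_1,m)P_{DP}(G_2,m)/(m(m-1))$ is Theorem~\ref{thm: upperbound2}. For the matching lower bound I would apply Lemma~\ref{lem: lowerOddCyc2} to the $m$-fold cover $\HH_1$ of the odd cycle $C_{n_1}$ at the edge $u_1v_1$: every allowed pair $(x,y)$ satisfies $N(\{(u_1,x),(v_1,y)\},\HH_1)\ge P_{DP}(C_{n_1},m)/(m(m-1))$. Plugging this into the displayed identity and using $\sum_{(x,y)}N(\{(u_2,x),(v_2,y)\},\HH_2)=P_{DP}(G_2,\HH_2)\ge P_{DP}(G_2,m)$ completes this case. (When both cycles are odd this is exactly Corollary~\ref{pro: oddCycleEdge}; the new point is that only one of the two factors needs the hypothesis of Corollary~\ref{thm: general2}.)

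The main case is $n_1,n_2$ both even, so $G$ is the theta graph $\Theta(1,n_1-1,n_2-1)$; here I would compute $P_{DP}(G,\HH)$ for an arbitrary full cover $\HH$ directly via canonical labeling. Take the spanning tree $T$ formed by the $u$--$v$ path of length $n_1-1$ together with the $u$--$v$ path of length $n_2-1$ with its edge at $v$ removed; by Proposition~\ref{pro: tree} the subcover $\HH_T$ has a canonical labeling, relative to which the two non-tree edges (the chord $uv$ and the removed edge of the second path) carry permutations $\alpha,\beta\in S_m$, and $\HH$ is determined by $(\alpha,\beta)$. Summing over the colors $(x,y)$ of $u$ and $v$: the pair is allowed exactly when $y\ne\alpha(x)$; the number of proper colorings of the interior of the first path is $p_1=\frac{(m-1)^{n_1-1}-(m-1)}{m}$ if $x=y$ and $q_1=\frac{(m-1)^{n_1-1}+1}{m}$ if $x\ne y$; and the number of colorings of the interior of the second path compatible, through the $\beta$-matching on the removed edge, with $c(v)=y$ is $P_2=\frac{(m-1)^{n_2-1}-(m-1)}{m}$ if $y=\beta(x)$ and $Q_2=\frac{(m-1)^{n_2-1}+1}{m}$ otherwise. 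Splitting the sum over $x$ into the five mutually exclusive cases recording whether $\alpha(x)=x$, whether $\beta(x)=x$, and whether $\alpha(x)=\beta(x)$, and using $p_1-q_1=P_2-Q_2=-1$, the contributions should collapse to
\begin{equation*}
P_{DP}(G,\HH)=D+E_{\alpha\beta}\,q_1+F_\alpha\,Q_2+(F_\beta-T_0),\qquad D:=m(m-1)q_1Q_2-mq_1-mQ_2,
\end{equation*}
where $F_\alpha,F_\beta$ count the fixed points of $\alpha,\beta$, $E_{\alpha\beta}=|\{x:\alpha(x)=\beta(x)\}|$, and $T_0=|\{x:\alpha(x)=\beta(x)=x\}|$. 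A routine expansion gives $D=\frac{1}{m}\big((m-1)^{n_1+n_2-1}-(m-1)^{n_1-1}-(m-1)^{n_2-1}-m-1\big)$, the claimed value. Since $q_1,Q_2>0$, $E_{\alpha\beta}\ge0$, $F_\alpha\ge0$, and $T_0\le F_\beta$, every full cover satisfies $P_{DP}(G,\HH)\ge D$, hence $P_{DP}(G,m)\ge D$; equality is attained by any full cover whose permutations $\alpha,\beta$ are fixed-point-free with $\alpha(x)\ne\beta(x)$ for all $x$, for instance $\alpha=(1\,2\,\cdots\,m)$ and $\beta=\alpha^{-1}$ (which is possible precisely because $m\ge3$), and which can also be recognized as the amalgamated cover (Definition~\ref{defn: edge amalgamation}) of optimal $m$-fold covers of $C_{n_1}$ and $C_{n_2}$.

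I expect the main obstacle to lie entirely in the even--even case: carrying out the five-way sum carefully enough that the correction terms assemble into the manifestly nonnegative expression above, and verifying the algebraic identity for $D$. A secondary technical point is confirming that permutations $\alpha,\beta$ realizing equality exist for every $m\ge3$ — they do not exist for $m=2$, which is consistent with the hypothesis $m\ge3$.
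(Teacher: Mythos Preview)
Your argument is correct. For the cases where at least one $n_i$ is odd, you do essentially what the paper does: factor out the pointwise lower bound from Lemma~\ref{lem: lowerOddCyc2} on the odd side and sum the remaining factor. Your observation that $\sum_{(x,y)}N(\{(u_2,x),(v_2,y)\},\HH_2)=P_{DP}(G_2,\HH_2)\ge P_{DP}(G_2,m)$ is in fact slightly cleaner than the paper's treatment, which instead bounds that sum explicitly by tracking how many pairs $(j_1,j_2)$ land in each of the two possible values of $N(\cdot,\HH_2')$.

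The even--even case is a genuinely different route. The paper stays with the separated covers $\HH_1,\HH_2$, notes that on each side the quantity $N(\{(u_i,j_1),(v_i,j_2)\},\HH_i)$ takes only the two values $((m-1)^{n_i-1}+1)/m$ and $((m-1)^{n_i-1}-(m-1))/m$ with the latter occurring at most $m$ times, and then invokes the rearrangement inequality to minimize the bilinear sum $\sum N_1 N_2$; the matching upper bound comes from an explicit amalgamated cover. You instead take a spanning tree of the whole theta graph, parameterize every full cover by a pair of permutations $(\alpha,\beta)$, and compute $P_{DP}(G,\HH)$ \emph{exactly} as $D+E_{\alpha\beta}\,q_1+F_\alpha\,Q_2+(F_\beta-T_0)$; the lower bound then follows from nonnegativity of each correction term, with no rearrangement inequality needed. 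This buys you an explicit closed form for $P_{DP}(G,\HH)$ for \emph{every} full cover in terms of simple invariants of $(\alpha,\beta)$, and makes the optimal choice $\alpha=(1\,2\,\cdots\,m)$, $\beta=\alpha^{-1}$ transparent (and visibly dependent on $m\ge3$). The paper's route, on the other hand, is the one that showcases the separation/amalgamation machinery of the section and is the template one would try to extend to gluings of more than two cycles. Your final aside identifying the extremal cover with an amalgamated cover of optimal covers of $C_{n_1},C_{n_2}$ is correct in spirit but would require a relabeling to match Definition~\ref{defn: edge amalgamation} precisely; it is not needed for the proof.
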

\begin{proof}
By Corollary~\ref{pro: oddCycleEdge}, $P_{DP}(G,m) = P_{DP}(G_1,m)P_{DP}(G_2,m)/(m(m-1))$ when $n_1$ and $n_2$ are odd.

Let $\HH = (L,H)$ be an $m$-fold cover of $G$ such that $P_{DP}(G,\HH) = P_{DP}(G,m)$. We can assume that $\HH$ is full. For each $i \in [2]$, let $\HH_i = (L_i,H_i)$ be the separated cover of $G_i$ obtained from $\HH$ where $u_{i,1} = u_i$ and $u_{i,2} = v_i$. Let $f$ be the identity permutation of $[m]$. Then $\HH$ is the $f$-amalgamated $m$-fold cover of $G$ obtained from $\HH_1$ and $\HH_2$ (By Definition~\ref{defn: edge amalgamation}, recall that we assume the vertices of $L_i(v_i)$ are renamed so that $(u_i,j)(v_i,j) \in E(H_i)$ for each $i \in [2]$ and $j \in [m]$). By Lemma~\ref{lem: upperGen2}, \\ $P_{DP}(G,\HH) = \sum_{j_2=1}^{m} \sum_{j_1=1}^{m} \left(N(\{(u_1,j_1),(v_1,j_2)\},\HH_1)N(\{(u_2,j_1),(v_2,j_2)\},\HH_2)\right)$.

For each $i \in [2]$, let $\HH_i' = (L_i,H_i')$ be the subcover of $\HH_i$ corresponding to $G_i'$ where $G_i' = G_i - \{u_iv_i\}$. Since $G_i'$ is a tree, $\HH_i'$ has a canonical labeling by Proposition~\ref{pro: tree}.  Suppose the vertices of $H_i'$ are named according to a canonical labeling. Let $r_i$ and $r_i'$ be the permutations of $[m]$ so that $(u_i,j) \in V(H_i)$ is called $(u_i,r_i(j))$ in $H_i'$ and $(v_i,j) \in V(H_i)$ is called $(v_i,r_i'(j))$ in $H_i'$. Notice that when $j_1 \not= j_2$, $N(\{(u_i,j_1),(v_i,j_2)\},\HH_i) = N(\{(u_i,r_i(j_1)),(v_i,r_i'(j_2))\},\HH_i')$ for $i \in [2]$. Also, if $j_1=j_2$, $N(\{(u_i,j_1),(v_i,j_2)\},\HH_i) = 0$ for $i \in [2]$.  Let $G_i'' = G_i \cdot u_iv_i$, and suppose that $w_i$ is the vertex obtained from contracting $u_iv_i$.  Assume that $a_1$ and $a_2$ are fixed elements of $[m]$ such that $a_1 \neq a_2$.

Consider the case where $n_i = 2k + 1$ for some $k \in \N$.  By Lemma~\ref{lem: lowerOddCyc2}, \\ $N(\{(u_i,r_i(a_1)),(v_i,r_i'(a_2))\},\HH_i') \geq P_{DP}(G_i,m)/(m(m-1)) = ((m-1)^{2k} - 1)/m$.

Now, consider the case where $n_i = 2l + 2$ for some $l \in \N$. Then when $r_i(a_1) \not= r_i'(a_2)$, since $H_i'$ has a canonical labeling, $N(\{(u_i,r_i(a_1)),(v_i,r_i'(a_2))\},\HH_i')$ is the number of proper $m$-colorings of $G_i$ that color $u_i$ with $r_i(a_1)$ and $v_i$ with $r_i'(a_2)$. Thus, $$N(\{(u_i,r_i(a_1)),(v_i,r_i'(a_2))\},\HH_i') = \frac{P(C_{2l+2},m)}{m(m-1)} = \frac{(m-1)^{2l+1} + 1}{m}.$$ When $r_i(a_1) = r_i'(a_2) = j$, since $H_i'$ has a canonical labeling, $N(\{(u_i,r_i(a_1)),(v_i,r_i'(a_2))\},\HH_i')$ is the number of proper $m$-colorings of $G_i''$ that color $w_i$ with $j$. Thus, $$N(\{(u_i,r_i(a_1)),(v_i,r_i'(a_2))\},\HH_i') = \frac{P(C_{2l+1},m)}{m} = \frac{(m-1)^{2l+1} - (m-1)}{m}.$$ Notice there are at most $m$ pairs $(j_1,j_2)$ in the set $[m]^2 - \{(i,i) : i \in [m]\}$ such that $N(\{(u_i,r_i(j_1)),(v_i,r_i'(j_2))\},\HH_i') = ((m-1)^{2l+1} - (m-1))/m$ since there can be at most $m$ pairs $(j_1,j_2) \in [m]^2$ such that $r_i(j_1) = r_i'(j_2)$. This implies that there are at least $m(m-2)$ pairs $(j_1,j_2) \in ([m]^2 - \{(i,i) : i \in [m]\})$ such that $N(\{(u_i,r_i(j_1)),(v_i,r_i'(j_2))\},\HH_i') = ((m-1)^{2l+1} + 1)/m$.

We now are ready to prove the result when exactly one of the numbers $n_1, n_2$ is odd.  Suppose without loss of generality $n_1 = 2k + 1$ and $n_2 = 2l + 2$ for some $k,l \in \N$. We calculate
\begin{align*}
    &P_{DP}(G,\HH) \\
		&= \sum_{(j_1,j_2) \in [m]^2, \; j_1 \neq j_2} N(\{(u_1,j_1),(v_1,j_2)\},\HH_1) N(\{(u_2,j_1),(v_2,j_2)\},\HH_2)\\
    &=\sum_{(j_1,j_2) \in [m]^2, \; j_1 \neq j_2}  N(\{(u_1,r_1(j_1)),(v_1,r_1'(j_2))\},\HH_1') N(\{(u_2,r_2(j_1)),(v_2,r_2'(j_2))\},\HH_2')\\
    &\geq \frac{(m-1)^{2k} - 1}{m} \sum_{(j_1,j_2) \in [m]^2, \; j_1 \neq j_2}  N(\{(u_2,r_2(j_1)),(v_2,r_2'(j_2))\},\HH_2')\\
    &\geq \frac{(m-1)^{2k} - 1}{m} \left(m\frac{(m-1)^{2l+1} - (m-1)}{m} + m(m-2)\frac{(m-1)^{2l+1} + 1}{m}\right)\\
    &= \frac{(m-1)^{2k+1} - (m-1)}{m(m-1)} \left((m-1)^{2l+2} - 1\right)
    = \frac{P_{DP}(G_1,m)P_{DP}(G_2,m)}{m(m-1)}.
\end{align*}
This calculation along with Theorem~\ref{thm: upperbound2} implies that \\ $P_{DP}(G,m) = P_{DP}(G_1,m)P_{DP}(G_2,m)/(m(m-1))$.  

Finally, we turn our attention to the case where both $n_1$ and $n_2$ are even.  Suppose $n_1 = 2k + 2$ and $n_2 = 2l + 2$ for some $k,l \in \N$. Using the facts above and the rearrangement inequality (see~\cite{R52}), we obtain:
\begin{align*}
    &P_{DP}(G,\HH) \\ 
		&= \sum_{(j_1,j_2) \in [m]^2, \; j_1 \neq j_2} N(\{(u_1,j_1),(v_1,j_2)\},\HH_1) N(\{(u_2,j_1),(v_2,j_2)\},\HH_2)\\
    &= \sum_{(j_1,j_2) \in [m]^2, \; j_1 \neq j_2} N(\{(u_1,r_1(j_1)),(v_1,r_1'(j_2))\},\HH_1') N(\{(u_2,r_2(j_1)),(v_2,r_2'(j_2))\},\HH_2') \\
    &\geq m\left(\frac{(m-1)^{2l+1} - (m-1)}{m}\right)\left(\frac{(m-1)^{2k+1} + 1}{m}\right) \\
		&+ m\left(\frac{(m-1)^{2l+1} + 1}{m}\right)\left(\frac{(m-1)^{2k+1} - (m-1)}{m}\right) \\
		&+ m(m-3)\left(\frac{(m-1)^{2l+1} + 1}{m}\right)\left(\frac{(m-1)^{2k+1} + 1}{m}\right)\\
    &= \frac{1}{m}\left((m-1)^{2l+2k+3} - (m-1)^{2l+1} - (m-1)^{2k+1} - m - 1\right)\\
    &= \frac{1}{m}\left((m-1)^{n_1 + n_2 - 1} - (m-1)^{n_2 - 1} - (m-1)^{n_1 - 1} - m - 1\right).
\end{align*}
Thus, $P_{DP}(G,m) \geq \left((m-1)^{n_1 + n_2 - 1} - (m-1)^{n_2 - 1} - (m-1)^{n_1 - 1} - m - 1\right)/m$.  

To finish the proof we will construct an $m$-fold cover $\HH' = (L',H')$ of $G$ such that \\ $P_{DP}(G,\HH') = \left((m-1)^{n_1 + n_2 - 1} - (m-1)^{n_2 - 1} - (m-1)^{n_1 - 1} - m - 1\right)/m$. In order to build $\HH'$, we will first construct an $m$-fold cover $\HH'_i = (L'_i,H'_i)$ of $G_i$ for each $i \in [2]$. For each $i \in [2]$, let $L'_i(x) = \{(x,j) : j \in [m]\}$ whenever $x \in V(G_i)$. Construct edges in $H'_i$ so that $L'_i(x)$ is a clique in $H'_i$ for each $x \in V(G_i)$.  Then, for each $xy \in E(G_i)-\{u_iv_i\}$ construct the edge $(x,j)(y,j)$ in $H'_i$ for each $j \in [m]$.  Finally, construct edges in $H'_1$ so that $\{(u_1,j)(v_1,j+1) : j \in [m-1]\} \cup \{(u_1,m)(v_1,1)\} \subset E(H'_1)$, and construct edges in $H'_2$ so that $\{(u_2,j)(v_2,j+2) : j \in [m-2]\} \cup \{(u_2,m-1)(v_2,1),(u_2,m)(v_2,2)\} \subset E(H'_2)$. 

Now, for each $i \in [2]$, let $\HH''_i = (L'_i,H''_i)$ be the $m$-fold cover of $G_i$ where $H''_i$ is the same graph as $H'_i$ except the vertices of $L'_i(v_i)$ are renamed so that $(u_i,j)(v_i,j) \in E(H_i)$ for each $j \in [m]$.  Specifically, for any $j \in [m]$, the vertex $(v_1,j) \in V(H''_1)$ is called $(v_1, (\text{$j$ mod $m$})+1)$ in $H'_1$, and the vertex $(v_2,j) \in V(H''_2)$ is called $(v_2, (\text{$j+1$ mod $m$})+1)$ in $H'_2$.  Let $f$ be the identity permutation of $[m]$ and $\HH$ be the $f$-amalgamated $m$-fold cover of $G$ obtained from $\HH''_1$ and $\HH''_2$. By Lemma~\ref{lem: upperGen2}, 
\begin{align*}
P_{DP}(G,\HH') &= \sum_{j_2=1}^{m} \sum_{j_1=1}^{m} N(\{(u_1,j_1),(v_1,j_2)\},\HH''_1) N(\{(u_2,j_1),(v_2,j_2)\},\HH''_2) \\
&= \sum_{(j_1,j_2) \in [m]^2, \; j_1 \neq j_2} N(\{(u_1,j_1),(v_1,j_2)\},\HH''_1) N(\{(u_2,j_1),(v_2,j_2)\},\HH''_2). 
\end{align*}
Furthermore, we know $N(\{(u_1,j_1),(v_1,j_2)\},\HH''_1) =  N(\{(u_1,j_1),(v_1, (\text{$j_2$ mod $m$})+1)\},\HH'_1)$ and $N(\{(u_2,j_1),(v_2,j_2)\},\HH''_2) = N(\{(u_2,j_1),(v_2,(\text{$j_2+1$ mod $m$})+1)\},\HH'_2)$.  So, \\ $N(\{(u_1,j_1),(v_1,j_2)\},\HH''_1) = ((m-1)^{2k+1} - (m-1))/m$ when $j_1 = (\text{$j_2$ mod $m$})+1$, and $N(\{(u_1,j_1),(v_1,j_2)\},\HH''_1) = ((m-1)^{2k+1} + 1)/m$ for all other $(j_1,j_2) \in ([m]^2 - \{(i,i) : i \in [m]\})$.  Similarly, $N(\{(u_2,j_1),(v_1,j_2)\},\HH''_2) = ((m-1)^{2l+1} - (m-1))/m$ when $j_1 = (\text{$j_2+1$ mod $m$})+1$, and $N(\{(u_2,j_1),(v_2,j_2)\},\HH''_2) = ((m-1)^{2l+1} + 1)/m$ for all other $(j_1,j_2) \in ([m]^2 - \{(i,i) : i \in [m]\})$.  An easy calculation then yields:
$$P_{DP}(G,\HH') =\frac{1}{m}\left((m-1)^{n_1 + n_2 - 1} - (m-1)^{n_2 - 1} - (m-1)^{n_1 - 1} - m - 1\right)$$
which completes the proof.
\end{proof}

\section{A Counterexample for Triangle-Gluings} \label{triangle}

In this section we will show that there exists a graph $G$ such that $G^*$, a triangle gluing of $G$ and $K_4$, satisfies $P_{DP}(G^*,m) > P_{DP}(G,m) P_{DP}(K_4,m)/(m(m-1)(m-2))$ for $m=4$. This gives us a counterexample to an affirmative answer for Questions~\ref{ques: classify} and~\ref{ques: easier} with $p=3$ and $d_G(v)=3$ respectively.

Let $G_0 = K_1  \vee \Theta(2,2,2)$. Let $w$ be the universal vertex in $G_0$. In the copy of $\Theta(2,2,2)$, let $v_1$, $v_2$ be the degree 3 vertices and $u_1, u_2, u_3$ be the degree 2 vertices. In the following, we will denote $G_0[\{v_1,v_2,u_1,u_2,u_3\}]$ by $G'$.

\begin{lem} \label{lem: PDPcounterex}
	$P_{DP}(G_0,4) \le 104 < P(G_0,4) =120$.
\end{lem}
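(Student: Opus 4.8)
The plan is to exhibit an explicit $4$-fold cover $\HH=(L,H)$ of $G_0$ with exactly $104$ $\HH$-colorings, which gives $P_{DP}(G_0,4)\le 104$; the strict inequality $104<120$ is immediate, and $P(G_0,4)=120$ follows from a short direct computation of the chromatic polynomial of $G_0=K_1\vee\Theta(2,2,2)$ (for instance, $P(G_0,m)=m\,P_\ell$-style deletion on the cone, or simply $P(G_0,m)=m\cdot P(\Theta(2,2,2),m-1)$ evaluated at $m=4$). So the real work is the construction of $\HH$ and the count of its colorings.

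First I would set up the cover. Since the universal vertex $w$ is adjacent to all of $v_1,v_2,u_1,u_2,u_3$, the idea is to use a cover that is canonical on the ``tree-like'' part but introduces a mismatch on one or more of the cycles through $\Theta(2,2,2)$ so as to kill colorings, in the spirit of the $C_{2k+2}$ construction from~\cite{KM19} recalled in the introduction. Concretely, I would take $L(x)=\{(x,j):j\in[4]\}$ for each vertex $x$, make each $H[L(x)]$ a clique, and for every edge incident to $w$ use the identity matching $(w,j)(x,j)$. For the six edges inside the $\Theta$-part (the edges $v_1u_i$ and $u_iv_2$ for $i\in[3]$), I would use identity matchings on enough of them to make things canonical and a cyclic shift on the remaining one(s): e.g. keep $v_1u_i$ canonical for all $i$, keep $u_iv_2$ canonical for $i=1,2$, and on $u_3v_2$ use the matching $(u_3,j)(v_2,j+1 \bmod 4)$. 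This makes exactly the one $4$-cycle $v_1u_3v_2u_2v_1$ (say) non-canonical while the rest of the cover is canonical.

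The key step is then to count $P_{DP}(G_0,\HH)$. I would condition on the color $(w,a)$ assigned to $w$; by symmetry of the $4$ colors each contributes equally, so $P_{DP}(G_0,\HH)=4\cdot N$ where $N$ counts colorings with $w\mapsto(w,a)$ for a fixed $a$. Given $w\mapsto(w,a)$, every neighbor $x$ of $w$ is forbidden the vertex $(x,a)$, so each of $v_1,v_2,u_1,u_2,u_3$ must be colored from the remaining $3$ vertices of its list; moreover the induced constraints among $v_1,v_2,u_1,u_2,u_3$ are exactly the edges of $\Theta(2,2,2)$ with the matchings inherited from $\HH$. Thus $N$ equals the number of $\HH_{G'}$-colorings of $G'=\Theta(2,2,2)$ after deleting vertex $(x,a)$ from each list, i.e. the number of proper colorings of $\Theta(2,2,2)$ with $3$-element lists where all matchings are identity except on the one designated edge, which is a cyclic shift. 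This is a bounded finite computation: enumerate over the $3$ choices for $v_1$ and the $3$ for $v_2$ (subject to the appropriate matching constraint between them along each of the three $u$-paths — careful, $v_1v_2\notin E$, so $v_1,v_2$ are only indirectly constrained), then multiply the independent counts for $u_1,u_2$ (canonical paths) and handle $u_3$ via the shifted edge. I expect $N=26$, giving $P_{DP}(G_0,\HH)=104$.

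The main obstacle is bookkeeping: making sure the chosen matchings really do produce a \emph{valid} cover (each $E_H(L(u),L(v))$ a perfect matching, condition (3) of the cover definition respected) and that the single non-identity edge genuinely reduces the count from the canonical value $P(\Theta(2,2,2),3)\cdot$(appropriate factor) down to $26$ rather than leaving it unchanged — i.e. verifying that the ``deficiency'' introduced by the shift is exactly what is needed. Here I would rely on the even-cycle phenomenon: a $4$-cycle with three identity matchings and one cyclic-shift matching on $3$-element lists loses exactly the colorings that would be monochromatic ``around'' the cycle, and a careful case split (mirroring the computation of $P_{DP}(C_4,m)=(m-1)^4-1$) pins down the loss. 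Once the per-$w$-color count $N$ is verified, $P_{DP}(G_0,4)\le 4N=104<120=P(G_0,4)$ is immediate, and computing $P(G_0,4)=120$ directly closes the lemma.
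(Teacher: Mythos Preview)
Your approach is essentially the paper's: construct an explicit $4$-fold cover, condition on the color of the universal vertex $w$, and count colorings of the induced $3$-fold cover of $\Theta(2,2,2)$. The paper twists \emph{two} edges ($v_1u_2$ by a cyclic shift of $1$ and $v_1u_3$ by a cyclic shift of $2$) rather than your single twist on $u_3v_2$, and it carries out the count by first passing to the spanning tree $G'-\{v_1u_2,v_1u_3\}$ (canonical, with $3\cdot 2^4=48$ colorings) and then subtracting via inclusion--exclusion the $22$ tree-colorings that violate one of the two removed edge constraints. Your single-twist cover also gives $N=26$ (a direct case split on $(v_1,v_2)\in\{2,3,4\}^2$ confirms it), so both constructions land on $104$; yours is arguably the simpler cover, while the paper's tree-plus-inclusion--exclusion avoids a nine-case enumeration.

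Two small points to tighten. First, your ``by symmetry of the $4$ colors'' step is correct but deserves one line of justification: the map $(x,j)\mapsto(x,(j\bmod 4)+1)$ is an automorphism of your $H$ (it preserves identity matchings and the cyclic-shift matching alike), so $N((w,a),\HH)$ is independent of $a$. Second, your claim that the twist makes ``exactly the one $4$-cycle $v_1u_3v_2u_2v_1$ non-canonical'' is off: a single twist on $u_3v_2$ makes \emph{both} $4$-cycles through $u_3$ non-canonical, while $v_1u_1v_2u_2v_1$ stays canonical. This does not affect the count, but the heuristic about a single twisted even cycle is not quite the right picture here, and you should replace ``I expect $N=26$'' with the actual computation.
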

\begin{proof}
	Since $\Theta(2,2,2)$ is isomorphic to $K_{2,3}$, and $P(K_{2,3},3) = 30$, it follows that $P(G_0,4) = 120$. To complete the proof, we now argue that $P_{DP}(G_0,4) \le 104$.
	
	We construct a $4$-fold cover $\mathcal{H} = (L,H)$ of $G_0$. Let $L(u) = \{ (u,j) : j \in [4] \}$ for $u \in V(G_0)$. The cross-edges are defined for each $j \in [4]$ as: $(u,j)(v,j) \in E(H)$ whenever $uv \in (E(G_0) - \{v_1u_2, v_1u_3\})$; $(v_1,j)(u_2,(j \mod 4)+1) \in E(H)$;  $(v_1,j)(u_3,(j + 1 \mod 4)+1) \in E(H)$.
	
We claim that $N((w,j),\mathcal{H}) = 26$ for each $j \in [4]$. This will imply that $P_{DP}(G_0,\mathcal{H}) = 104$, since $w$ is the universal vertex in $G_0$.
	
	
We will count the number of possibilities for picking vertices in $H$ to form an $\mathcal{H}$-coloring $I$ of $G_0$. Without loss of generality, suppose $(w,1) \in I$.
	Then $I$ can not contain any vertices in $\{(u_i,1) : i \in [3]\} \cup \{(v_j,1) : j \in [2] \}$. Let $H'$ be the subgraph of $H$ that remains after removing $L(w)$ and the five vertices in this set. Let $L'(u) = \{ (u,j) : j \in \{2,3,4\}\}$ for $u \in \{v_1,v_2,u_1,u_2,u_3\}$. Then,  $\mathcal{H}'=(L',H')$ is a 3-fold cover of $G'$ (recall $G'$ is a copy of $\Theta(2,2,2)$, or equivalently $K_{2,3}$). Clearly $N((w,1),\mathcal{H})$ equals $P_{DP}(G',\mathcal{H}')$, the number of $\mathcal{H}'$-colorings of $G'$.
	
	Let $\mathcal{H}''$ be the subcover of $\mathcal{H}'$ corresponding to $G'' = G' - \{v_1u_2, v_1u_3\}$.  It is immediately clear that $\HH''$ is a full 3-fold cover of $G''$. Since $G''$ is a tree, $\mathcal{H}''$ has a canonical labeling (by Proposition~\ref{pro: tree}) and there are 48  $\mathcal{H}''$-colorings of $G''$.  To find $P_{DP}(G',\mathcal{H}')$, we need to determine the number of these $\mathcal{H}''$-colorings of $G''$ that are not also $\mathcal{H}'$-colorings of $G'$ and subtract this number from 48.
	
	Note $\{(v_1,2)(u_2,3), (v_1,3)(u_2,4) \} = E_{H'}(L'(v_1), L'(u_2))$.  Let $A$ be the set of all $\mathcal{H}''$-colorings containing $(v_1,2)$ and $(u_2,3)$, and $B$ be the set of all $\mathcal{H}''$-colorings containing $(v_1,3)$ and $(u_2,4)$. Then $|A| = |B| = 6$.
	
	Similarly, note $\{(v_1,2)(u_3,4), (v_1,4)(u_3,2) \} = E_{H'}(L'(v_1), L'(u_3))$. Let $C$ be the set of all $\mathcal{H}''$-colorings containing $(v_1,2)$ and $(u_3,4)$, and $D$ be the set of all $\mathcal{H}''$-colorings containing $(v_1,4)$ and $(u_3,2)$. Then $|C| = |D| = 6$. 
	
Notice $A,B,C,D$ are pairwise disjoint except $|A \cap C| =2$. Hence $|A \cup B \cup C \cup D| = 22$. Therefore, there are $48 -22 = 26$ $\mathcal{H}'$-colorings of $G'$ and $N((w,1),\mathcal{H}) = 26$, as claimed.
\end{proof}

Next, we will prove that any ``precoloring'' of vertices of a $K_3$ in $G_0$ can be extended to an $\mathcal{H}$-coloring of $G_0$ for any 4-fold cover $\mathcal{H}$ of $G_0$. In the following, if $x,y,z$ are the vertices of a clique of order 3 in a graph $G$, we will say that \emph{$xyz$ is a triangle in $G$}.

\begin{lem} \label{lem: precolorcounterex}
Let $x_1x_2x_3$ be a triangle in $G_0$. Let $\mathcal{H} = (L, H)$ be any 4-fold cover of $G_0$. Then, $N(\{p_1,p_2,p_3\}, \mathcal{H}) \ge 1$ whenever $\{p_1,p_2,p_3\}$, with $p_i \in L(x_i)$, is an independent set in $\mathcal{H}$.	
\end{lem}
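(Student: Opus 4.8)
The plan is to reduce the statement to a statement about list coloring, and then exploit the structure of $G_0 = K_1 \vee \Theta(2,2,2)$ together with the fact that $\Theta(2,2,2) \cong K_{2,3}$ is quite sparse. First I would observe that it suffices to prove the following: for any $4$-assignment-type cover $\mathcal{H}$ of $G_0$ and any triangle $x_1x_2x_3$ in $G_0$, every independent set $\{p_1,p_2,p_3\}$ with $p_i \in L(x_i)$ extends to an $\mathcal{H}$-coloring. The triangles of $G_0$ are easy to enumerate: every triangle contains the universal vertex $w$ (since $\Theta(2,2,2)$ is triangle-free, being bipartite), so a triangle must be of the form $w v_a u_b$ where $v_a \in \{v_1,v_2\}$, $u_b \in \{u_1,u_2,u_3\}$ and $v_a u_b \in E(\Theta(2,2,2))$. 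Thus up to the obvious symmetries there is essentially one type of triangle to handle, say $w v_1 u_1$.

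Next I would delete the three precolored vertices $p_1,p_2,p_3$ from $H$ together with everything in $L(x_i)$, leaving a cover $\mathcal{H}^-$ of the graph $G_0 - \{w,v_1,u_1\}$. Since $\mathcal H$ is a $4$-fold cover and we have removed one vertex from each of three lists, the remaining vertices each retain a list of size at least $4 - (\text{number of removed neighbors})$. Here I compute degrees into $\{w,v_1,u_1\}$: in $G_0$, the vertex $w$ is adjacent to everything, so each surviving vertex loses at least one color to the constraint from $w$; $v_1$ has degree $3$ in $\Theta(2,2,2)$ (adjacent to $u_1,u_2,u_3$) hence degree $4$ in $G_0$; $u_1$ has degree $2$ in $\Theta(2,2,2)$ (adjacent to $v_1,v_2$) hence degree $3$ in $G_0$. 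Working this out vertex by vertex for the remaining vertices $v_2, u_2, u_3$: $u_2$ and $u_3$ each lose a color from $w$ and a color from $v_1$, so each has a list of size $\ge 2$; $v_2$ loses a color from $w$ and a color from $u_1$, so it has a list of size $\ge 2$. The remaining graph $G_0[\{v_2,u_2,u_3\}]$ is a star centered at $v_2$ (since $v_2 \sim u_2, v_2 \sim u_3$, and $u_2 \not\sim u_3$), i.e. a path/tree on three vertices.

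The key step, then, is to show that this residual list-coloring-type problem on a tree with all lists of size at least $2$ always has a solution — but a size-$2$ list on a path $P_3$ is enough for a greedy argument only if we are careful about the center. The cleanest route is: color $u_2$ and $u_3$ first arbitrarily from their (size $\ge 2$) lists, then color $v_2$; the center $v_2$ has degree $2$ in the residual graph, so its list of size $\ge 2$ might be fully blocked. So instead I would color $v_2$ first (list size $\ge 2$, so some choice exists), and then $u_2, u_3$ each have only $v_2$ as a constraining neighbor in the residual graph, and lists of size $\ge 2$, so each can be colored. This greedy ordering — center of the star first, then the leaves — always succeeds, which is exactly the cover-analogue of the statement that a forest with all lists of size $2$ is colorable (and this is valid in the DP/cover setting because a cross-edge matching between two lists can forbid at most one color). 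Carrying out the symmetric cases (other choices of $v_a, u_b$, and the degenerate check that the three surviving vertices and their residual adjacencies always form such a star) completes the argument.

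The main obstacle I anticipate is bookkeeping rather than conceptual: I must verify that for *every* triangle $wv_au_b$ the three remaining vertices, with their reduced lists, really do form a tree (or forest) with all lists of size $\ge 2$, and that no surviving vertex accidentally loses *two* colors to a single removed neighbor — but this cannot happen because condition (4) in the definition of a cover forces $E_H(L(x),L(y))$ to be a matching, so each removed vertex $p_i$ kills at most one color in each surviving list. Once that is checked, the greedy "center-first" coloring of the residual star finishes it; there is essentially nothing else to do.
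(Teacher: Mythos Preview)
Your proposal is correct and follows essentially the same approach as the paper: both observe that every triangle in $G_0$ contains $w$, that deleting the triangle leaves a $P_3$ whose three vertices each lose exactly two colors (hence retain lists of size $\geq 2$), and then greedily color the center of the $P_3$ first followed by the two leaves. The paper's write-up is terser (it simply names $y_1$ as the highest-degree vertex in the residual $P_3$ and colors $y_1,y_2,y_3$ in that order), but your extra bookkeeping about the matching condition and the residual-star structure is just making explicit what the paper leaves implicit.
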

\begin{proof}
 Note that $w$, the universal vertex in $G_0$, must be one of  $x_1,x_2,x_3$. When we remove $x_1,x_2,x_3$ from $G_0$, we are left with a copy of $P_3$. Let $y_1$ be the vertex of highest degree in $G_0 - \{x_1,x_2, x_3\}$, and $y_2$, $y_3$ be the remaining two degree 1 vertices. Starting with the given vertices $\{p_1,p_2,p_3\}$, we can greedily complete an $\mathcal{H}$-coloring of $G_0$ picking either of two available vertices in $L(y_1)$, followed by the vertices available in $L(y_2)$ and $L(y_3)$. 
\end{proof}

The final ingredient in the construction is the following operation (sometimes called a {\sl tessellation}) on a graph $G$ containing a triangle $abc$: $T(G,abc)$ is the graph obtained from $G$ by adding a new vertex $d$ along with edges that make $d$ a common neighbor to the vertices $a,b,c$. Note that $T(G,abc)$ creates a graph that is equivalent to a triangle-gluing of $G$ with a $K_4$.

$G_0$ has a total of 6 triangles (each of the form $wv_iu_j$, $i \in\{1,2\}, j\in \{1,2,3\})$. Let us name the triangle $wv_1u_1$ as $t_1$ and arbitrarily name the remaining five triangles as $t_i$, $i \in \{2,\ldots,6\}$. Let $G_k = T(G_{k-1}, t_k)$ for each $k \in [6]$. We will use the name $d_i$ for the new vertex introduced in $G_i$ for each $i \in [6]$.

\begin{lem} \label{lem: Finalcounterex}
	If $G_1, \ldots, G_5$ are not counterexamples for an affirmative answer to Question~\ref{ques: classify}, then $G_6$ is such a counterexample.
\end{lem}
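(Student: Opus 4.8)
The plan is to prove that, under the hypothesis, $P_{DP}(G_6,4) > P_{DP}(G_5,4)$. Since $T(G_5,t_6)$ exhibits $G_6$ as a triangle-gluing of $G_5$ and $K_4$, and $K_4$ is chordal so $P_{DP}(K_4,4)=P(K_4,4)=24$, the instance of Question~\ref{ques: classify} at $m=4$ for this decomposition asserts $P_{DP}(G_6,4)\le P_{DP}(G_5,4)\cdot 24/(4\cdot 3\cdot 2)=P_{DP}(G_5,4)$; hence the strict inequality $P_{DP}(G_6,4) > P_{DP}(G_5,4)$ does make $G_6$ a counterexample. First I would note that the hypothesis forces $P_{DP}(G_5,4)\le 104$: for each $k\in[5]$, $G_k=T(G_{k-1},t_k)$ is likewise a triangle-gluing of $G_{k-1}$ and $K_4$, so ``$G_k$ not a counterexample'' gives $P_{DP}(G_k,4)\le P_{DP}(G_{k-1},4)$, and chaining these with Lemma~\ref{lem: PDPcounterex} yields $P_{DP}(G_5,4)\le P_{DP}(G_4,4)\le\cdots\le P_{DP}(G_0,4)\le 104$. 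Since completing a cover to a full cover never increases the number of colorings, it then suffices to prove that every \emph{full} $4$-fold cover $\mathcal{H}=(L,H)$ of $G_6$ satisfies $P_{DP}(G_6,\mathcal{H})\ge P_{DP}(G_5,4)+1$.

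Fix such an $\mathcal{H}$, let $\mathcal{H}_0$ be the subcover of $\mathcal{H}$ induced by $V(G_0)$, and set $N_0=P_{DP}(G_0,\mathcal{H}_0)$. Each $d_k$ is adjacent in $G_6$ only to the three vertices of $t_k\subseteq V(G_0)$, so an $\mathcal{H}$-coloring of $G_6$ is an $\mathcal{H}_0$-coloring $I_0$ of $G_0$ together with an independent choice of color for each $d_k$; since $\mathcal{H}$ is full, the number of valid colors of $d_k$ is $e_k(I_0)\in\{1,2,3\}$, namely $4$ minus the number of distinct colors of $d_k$ forbidden under the three matchings from $L(d_k)$ to the vertices of $t_k$. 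Thus $P_{DP}(G_6,\mathcal{H})=\sum_{I_0}\prod_{k=1}^{6}e_k(I_0)$, summed over all $\mathcal{H}_0$-colorings $I_0$ of $G_0$. If some $\mathcal{H}_0$-coloring has $e_k(I_0)\ge 2$ for some $k$, then its summand is at least $2$ and the remaining $N_0-1$ summands are at least $1$, whence $P_{DP}(G_6,\mathcal{H})\ge N_0+1\ge P_{DP}(G_0,4)+1\ge P_{DP}(G_5,4)+1$, using the monotonicity obtained above.

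It remains to handle the case $e_k(I_0)=1$ for \emph{all} $\mathcal{H}_0$-colorings $I_0$ and all $k$, which is the crux. By Lemma~\ref{lem: precolorcounterex} applied to $\mathcal{H}_0$, every independent transversal of every triangle $t_k$ of $G_0$ extends to an $\mathcal{H}_0$-coloring of $G_0$; hence this case is equivalent to: for each $t_k=wv_iu_j$, \emph{every} independent transversal of $t_k$ forbids three \emph{distinct} colors of $d_k$. A short permutation computation — valid because $m=4\ge 3$ and all matchings are perfect — then extracts from this, for each $k$, the ``compose-around-the-triangle'' identity on the three matchings of $\mathcal{H}_0$ restricted to $t_k$, i.e.\ $\mathcal{H}_0$ restricted to $t_k$ admits a canonical labeling. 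As the six triangles $wv_iu_j$ cover all eleven edges of $G_0$, these six local canonical labelings can be made consistent and assembled into one for $\mathcal{H}_0$ itself: normalize the labeling on $wv_1u_1$, then on $u_2$ and $u_3$ via the triangles through $wv_1$, then on $v_2$ via $wv_2u_1$, and observe that the matchings on $v_2u_2$ and $v_2u_3$ are thereby forced to be the identity. Therefore $N_0=P(G_0,4)=120$, and since all $e_k\equiv 1$ here, $P_{DP}(G_6,\mathcal{H})=120>104\ge P_{DP}(G_5,4)$. In every case $P_{DP}(G_6,\mathcal{H})\ge P_{DP}(G_5,4)+1$, proving $P_{DP}(G_6,4)>P_{DP}(G_5,4)$, so $G_6$ is a counterexample.

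I expect the main obstacle to be this last case — the passage from ``$e_k\equiv 1$ on all six triangles'' to ``$\mathcal{H}_0$ is canonical''. The decisive step is the per-triangle permutation identity, and it is exactly here that Lemma~\ref{lem: precolorcounterex} is indispensable: it guarantees the constraint $e_k=1$ is imposed by \emph{every} independent transversal of $t_k$, not merely by those realizable on some proper subgraph, after which one must check that the six local identities are mutually consistent and globalize them to a single canonical labeling of $\mathcal{H}_0$.
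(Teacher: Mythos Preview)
Your argument is correct and rests on the same scaffolding as the paper's proof: chain the hypothesis to get $P_{DP}(G_5,4)\le P_{DP}(G_0,4)\le 104$, restrict an optimal cover of $G_6$ to $\mathcal{H}_0$ on $V(G_0)$, express $P_{DP}(G_6,\mathcal{H})$ as a sum of extension counts over $\mathcal{H}_0$-colorings, and use Lemma~\ref{lem: precolorcounterex} to pass between independent transversals of triangles and full $\mathcal{H}_0$-colorings. The difference is organizational. The paper argues by contradiction and splits on whether $\mathcal{H}_0$ itself is canonical: if not, after normalizing the matchings at $w$ it locates a single twisted edge $v_iu_j$, relabels around the corresponding $d_k$, cites that a non-canonical $C_3$-cover has at least $25$ independent transversals to produce one with a repeated label, and uses that one coloring to push $\sum_{I_0}|f(I_0)|$ strictly above $|\mathcal{C}_0|$. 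You instead argue directly and split on whether some $e_k(I_0)\ge 2$; in the residual case you extract, for each triangle, the compose-around identity $\tau_{xz}=\tau_{yz}\tau_{xy}$ from the condition ``all transversals forbid three distinct colors at $d_k$'' (valid since $m=4\ge 3$ lets any admissible pair be completed to a transversal), and then globalize the six local canonical labelings to one for $\mathcal{H}_0$. Your route does strictly more work---the globalization step is not needed if one takes the paper's contrapositive---but it makes the mechanism behind the ``canonical'' case fully explicit and avoids the WLOG relabeling of $v_i,u_j$ that the paper invokes.
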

\begin{proof}
Let $G^* = G_6$.
Note that $P_{DP}(K_4,4)= 24$. So, the hypotheses of the Lemma imply that for each $i \in [5]$, $ P_{DP}(G_i,4) \le P_{DP}(G_{i-1},4)  P_{DP}(K_4,4)/ 24$; that is, $P_{DP}(G_0,4) \ge P_{DP}(G_1,4) \ge P_{DP}(G_2,4) \ge P_{DP}(G_3,4) \ge P_{DP}(G_4,4) \ge P_{DP}(G_5,4)$.  To show $G^*$ is a counterexample, we will show that $P_{DP}(G^*,4) >  P_{DP}(G_0,4)$.

For the sake of contradiction, assume $P_{DP}(G^*,4) \le  P_{DP}(G_0,4)$.  We know $P_{DP}(G_0,4) <  P(G_0,4)$ by Lemma~\ref{lem: PDPcounterex}.  Let $\mathcal{H}^*= (L^*, H^*)$ be a 4-fold cover of $G^*$ that gives the minimum number of DP-colorings of $G^*$,  $P_{DP}(G^*,4) =  P_{DP}(G^*,\mathcal{H}^*)$. Let $\mathcal{H}_0 = (L_0,H_0)$ be the subcover of $\mathcal{H}^*$ induced by $V(G_0)$.


Suppose $\mathcal{H}_0$ has a canonical labeling.  Then we have $P(G_0,4) = 120$ $\mathcal{H}_0$-colorings of $G_0$. Since each of the new tessellation vertices $d_1, \ldots, d_6$ in $G^*$ is of degree 3, each of these $\mathcal{H}_0$-colorings can be extended to an $\mathcal{H}^*$ of $G^*$. This contradicts $P_{DP}(G^*,4) < P(G_0,4)$. Thus, $\mathcal{H}_0$ has no canonical labeling.

We will now rename the vertices of $H_0$ and $H^*$ as $L^*(u) = L_0(u) = \{ (u,j) : j \in [m] \}$ for all $u \in V(G_0) - \{w\}$ in such way that the cross-edges of $H_0$ incident to vertices in $L_0(w)$ are of the form $(w,j)(v,j)$ for each $v\ne w$ in $V(G_0)$.  Since $\mathcal{H}_0$ has no canonical labeling, at least one of the matchings $E_{H_0}(L(u),L(v))$, where $u,v \in V(G_0)-\{w\}$, is \emph{twisted}; that is, there exists an $l \in [4]$ such that $(u,l)(v,l)\not\in E(H_0)$.  Without loss of generality, we can assume that the matching $E_{H_0}(L(u_1),L(v_1))$ is twisted.  Importantly, this implies that if $\HH_1 = (L_1, H_1)$ is the subcover of $\HH^*$ induced by $\{w, u_1, v_1 \}$, then $\HH_1$ does not have a canonical labeling (see the argument for Lemma~14 in~\cite{KM21}). Let $d= d_1$.  Recall $d$ is the tessellation vertex introduced in the triangle $wu_1v_1$. We can now rename the vertices of $L^*(d)$, $L^*(u_1)$, and $L^*(v_1)$ such that the cross-edges of $H^*$ incident to the vertices of $L^*(d)$ are of the form $(d,j)(v,j)$ for each $v \in \{w,u_1,v_1\}$.  Also, rename the vertices of $L_0(u_1)$ and $L_0(v_1)$ in the same way the vertices of $L^*(u_1)$ and $L^*(v_1)$ were renamed.   

Using the naming schemes described above for $\HH_0$ and $\HH^*$, let $\mathcal{C}_0$ be the set of all $\mathcal{H}_0$-colorings of $G_0$, and let $\mathcal{C}^*$ be the set of all $\mathcal{H}^*$-colorings of $G^*$.  Since each of the new tessellation vertices, $d_i$, has degree 3, each $\mathcal{H}_0$-coloring of $G_0$ can be extended to a $\mathcal{H}^*$-coloring of $G^*$. Hence the following function is well-defined.  Let $f : \mathcal{C}_0 \rightarrow \mathcal{P}(\mathcal{C}^*)$ be given by $f_0(I) = \{C \in \mathcal{C}^* \;:\; I \subseteq C\}$.

By definition, $f(I) \cap f(J) = \emptyset$ for $I \neq J$. And, as noted above, $|f(I)| \ge 1$ for all $I \in \mathcal{C}_0$.  This means $P_{DP}(G_0,4) \le P_{DP}(G_0,\mathcal{H}_0) \le \sum_{I\in \mathcal{C}_0} |f(I)| \le |\mathcal{C}^*|=P_{DP}(G^*,\mathcal{H}^*)=P_{DP}(G^*,4)$. Next, we will argue that, in fact $P_{DP}(G_0,\mathcal{H}_0) < \sum_{I\in \mathcal{C}_0} |f(I)|$, which will lead to a contradiction to our assumption that  $P_{DP}(G_0,4) \ge P_{DP}(G^*,4)$.

Let $\mathcal{H}_2 = (L_2,H_2)$ be the subcover of $\mathcal{H}^*$ induced by $\{w,v_1,u_1\}$ (note that $H_1$ and $H_2$ are isomorphic but may have different vertex names) and $\mathcal{C}_2$ be the set of all $\mathcal{H}_2$-colorings of $G_0[\{w,v_1,u_1\}]$. Since $\HH_2$ has no canonical labeling, the argument for Lemma~23 in~\cite{KM19} implies that $P_{DP}(G_0[\{w,v_1,u_1\}],\mathcal{H}_2) > P(C_3,4)$ which means $P_{DP}(G_0[\{w,v_1,u_1\}],\mathcal{H}_2) \ge 25$. So, there exists $A=\{(u_1,i),(v_1,j),(w,l)\} \in \mathcal{C}_2$ such that $|\{i,j,l\}| \le 2$. By Lemma~\ref{lem: precolorcounterex}, there exists a $D \in \mathcal{C}_0$ such that $A \subseteq D$. Since $|L^*(d)-\{(d,i),(d,j),(d,l)\}|\ge 2$, $|f(D)| \ge 2$. This implies  $P_{DP}(G_0,\mathcal{H}_0) < \sum_{I\in \mathcal{C}_0} |f(I)|$, as claimed. 
\end{proof}

\section{General Gluings} \label{general}

The issue with trying to generalize the proof of Theorem~\ref{thm: upperbound2} to $K_p$-gluings with $p \geq 3$ is that Definition~\ref{defn: edge amalgamation} does not easily generalize to $K_3$-gluings. In particular, the part of Definition~\ref{defn: edge amalgamation} where we rename vertices so that $(u_i,j)(v_i,j) \in E(H_i)$ for each $j \in [m]$ may not be possible when a cycle is present in the clique we are gluing. The counterexample in Section~\ref{triangle} shows that this issue is sometimes unavoidable. So, in this Section we prove a weaker result by not considering all full covers; that is, we prove Theorem~\ref{thm: upperboundFixed}.  

Suppose $p \geq 1$ and $G$ is an arbitrary graph such that $K= \{v_1,\ldots,v_p\}$ is a clique in $G$.  Recall that an $m$-fold cover $\HH = (L,H)$ of $G$ is \emph{conducive} to $K$ if $\HH$ is full and the $m$-fold cover $\mathcal{H}_K$ of $G[K]$ admits a canonical labeling. Unless otherwise noted, if $\HH=(L,H)$ is conducive to $\{v_1,\ldots,v_p\}$, we will assume that $L(v_q) = \{(v_q,j) : j \in [m]\}$ for each $q \in [p]$ and that $\{(v_q,j) : q \in [p]\}$ is a clique in $H$ for each $j \in [m]$.

Suppose $G$ is an arbitrary graph, $K$ is a clique in $G$, and $m \in \N$.  The \emph{$K$-canonical DP-Color Function of $G$}, $P_{DP}'(G,K,m)$, is the minimum value of $P_{DP}(G,\HH)$ where the minimum is taken over all $m$-fold covers of $G$ conducive to $K$. Clearly, $P_{DP}'(G,K,m) \geq P_{DP}(G,m)$.  We are now ready to present an important generalization of Definition~\ref{defn: edge amalgamation}.

\begin{defn}\label{defn: complete amalgamation}
For some $n \geq 2$, suppose that $G_1,\ldots,G_n$ are vertex disjoint graphs such that $\{u_{i,1},\ldots,u_{i,p}\}$ is a clique in $G_i$ for each $i \in [n]$. For each $i \in [n]$, suppose $\HH_i = (L_i,H_i)$ is an $m$-fold cover of $G_i$ conducive to $\{u_{i,1},\ldots,u_{i,p}\}$. Let $f_{k+1}$ be a permutation of $[m]$ for each $k \in [n-1]$, and let $F = (f_2,\ldots,f_n)$. Let $G$ be the graph obtained by identifying $u_{1,q},\ldots,u_{n,q}$ as the same vertex $u_q$ for each $q \in [p]$. \textbf{The $F$-amalgamated $m$-fold cover of $G$ obtained from $\HH_1,\ldots,\HH_n$} is an $m$-fold cover $\HH = (L,H)$ of $G$ defined as follows. In the special case where $n = 2$, we may also say $f_2$-amalgamated $m$-fold cover of $G$ obtained from $\HH_1$ and $\HH_2$.

For each $i \in [n]$, assume $L_i(x) = \{(x,j): j \in [m]\}$ for each $x \in V(G_i)$. For each $i \in [n]$, let $X_i$ be the set of edges in $H_i$ that are incident to at least one element in $\bigcup_{q=1}^{p} L_i(u_{i,q})$. Let $L(x) = \{(x,j) : j \in [m]\}$ for each $x \in V(G)$. Construct edges in $H$ so that for each $q \in [p]$, $L(u_q)$ is a clique in $H$. Construct edges in $H$ so that $\bigcup_{i=1}^{n} (E(H_i) - X_i) \subseteq E(H)$. Then for each $j \in [m]$, $k \in [n-1]$, and $q \in [p]$, whenever $(x,r)(u_{1,q},j) \in E(H_1)$ where $(x,r) \notin \bigcup_{q=1}^{p} L_1(u_{1,q})$ or $(x,r)(u_{k+1,q},f_{k+1}(j)) \in E(H_{k+1})$ where $(x,r) \notin \bigcup_{q=1}^{p} L_{k+1}(u_{k+1,q})$, construct the edge $(x,r)(u_q,j)$ in $H$. Finally, construct edges in $H$ such that $\{(u_q,j) : q \in [p]\}$ is a clique in $H$ for each $j \in [m]$.
\end{defn}

Clearly, the $F$-amalgamated $m$-fold cover of $G$ obtained from $\HH_1,\ldots,\HH_n$ is conducive to $\{u_1,\ldots,u_p\}$.  Below is an illustration of Definition~\ref{defn: complete amalgamation} with $n=2$, $m=3$, $G_1= K_4 - e$, and $G_2$ is a $C_3$ with an pendant edge.  Note in the drawing of each cover below only the cross-edges of each cover are shown.

\begin{center}
\begin{tikzpicture}
    \coordinate (v11) at (-3,0);
    \coordinate (v12) at (-2,1.7);
    \coordinate (v01) at (-2.5,2.7);
    \coordinate (v13) at (-1,0);
    \coordinate (v14) at (-4,1.7);
    
    \coordinate (v21) at (1,0);
    \coordinate (v22) at (2,1.7);
    \coordinate (v02) at (2.5,2.7);
    \coordinate (v23) at (3,0);
    \coordinate (v24) at (4,1.7);
    
    \coordinate (v4) at (6,1.7);
    \coordinate (v1) at (7,0);
    \coordinate (v2) at (8,1.7);
    \coordinate (v03) at (8,2.7);
    \coordinate (v3) at (9,0);
    \coordinate (v5) at (10,1.7);
    
    \draw[fill=black] (v11) circle[radius=1.5pt] node[below=3pt,scale=0.9] {$u_{1,1}$};
    \draw[fill=black] (v12) circle[radius=1.5pt] node[right=3pt,scale=0.9] {$u_{1,2}$};
    \draw[fill=black] (v01) circle[radius=0pt] node[below=3pt,scale=1.2] {$G_1$};
    \draw[fill=black] (v13) circle[radius=1.5pt] node[below=3pt,scale=0.9] {$u_{1,3}$};
    \draw[fill=black] (v14) circle[radius=1.5pt] node[left=3pt,scale=0.9] {$v_1$};
    
    \draw[fill=black] (v21) circle[radius=1.5pt] node[below=3pt,scale=0.9] {$u_{2,1}$};
    \draw[fill=black] (v02) circle[radius=0pt] node[below=3pt,scale=1.2] {$G_2$};
    \draw[fill=black] (v22) circle[radius=1.5pt] node[right=3pt,scale=0.9] {$u_{2,2}$};
    \draw[fill=black] (v23) circle[radius=1.5pt] node[below=3pt,scale=0.9] {$u_{2,3}$};
    \draw[fill=black] (v24) circle[radius=1.5pt] node[left=3pt,scale=0.9] {$v_2$};
    
    \draw[fill=black] (v1) circle[radius=1.5pt] node[below=3pt,scale=0.9] {$u_1$};
    \draw[fill=black] (v03) circle[radius=0pt] node[below=3pt,scale=1.2] {$G$};
    \draw[fill=black] (v2) circle[radius=1.5pt] node[right=3pt,scale=0.9] {$u_2$};
    \draw[fill=black] (v3) circle[radius=1.5pt] node[below=3pt,scale=0.9] {$u_3$};
    \draw[fill=black] (v4) circle[radius=1.5pt] node[left=3pt,scale=0.9] {$v_1$};
    \draw[fill=black] (v5) circle[radius=1.5pt] node[left=3pt,scale=0.9] {$v_2$};
    
    \draw (v11) -- (v12);
    \draw (v12) -- (v13);
    \draw (v13) -- (v11);
    \draw (v11) -- (v14);
    \draw (v12) -- (v14);
    
    \draw (v21) -- (v22);
    \draw (v22) -- (v23);
    \draw (v23) -- (v21);
    \draw (v23) -- (v24);
    
    \draw (v1) -- (v2);
    \draw (v2) -- (v3);
    \draw (v3) -- (v1);
    \draw (v1) -- (v4);
    \draw (v2) -- (v4);
    \draw (v3) -- (v5);
\end{tikzpicture}

\begin{tikzpicture}
    \coordinate (v111) at (-5,0);
    \coordinate (v121) at (-3,3.4);
    \coordinate (v131) at (-1,0);
    \coordinate (v141) at (-7,3.4);
    \coordinate (v112) at (-5,-0.5);
    \coordinate (v122) at (-3,2.9);
    \coordinate (v132) at (-1,-0.5);
    \coordinate (v142) at (-7,2.9);
    \coordinate (v113) at (-5,-1);
    \coordinate (v123) at (-3,2.4);
    \coordinate (v133) at (-1,-1);
    \coordinate (v143) at (-7,2.4);
    \coordinate (v01) at (-4,4.4);
    
    \coordinate (v211) at (1,0);
    \coordinate (v221) at (3,3.4);
    \coordinate (v231) at (5,0);
    \coordinate (v241) at (7,3.4);
    \coordinate (v212) at (1,-0.5);
    \coordinate (v222) at (3,2.9);
    \coordinate (v232) at (5,-0.5);
    \coordinate (v242) at (7,2.9);
    \coordinate (v213) at (1,-1);
    \coordinate (v223) at (3,2.4);
    \coordinate (v233) at (5,-1);
    \coordinate (v243) at (7,2.4);
    \coordinate (v02) at (4,4.4);
    
    \coordinate (v0) at (0,4.4);
    
    \draw[fill=gray!30] (-5,-0.5) ellipse (8mm and 12mm);
    \draw[fill=gray!30] (-3,2.9) ellipse (8mm and 12mm);
    \draw[fill=gray!30] (-1,-0.5) ellipse (8mm and 12mm);
    \draw[fill=gray!30] (-7,2.9) ellipse (8mm and 12mm);
    
    \draw[fill=gray!30] (1,-0.5) ellipse (8mm and 12mm);
    \draw[fill=gray!30] (3,2.9) ellipse (8mm and 12mm);
    \draw[fill=gray!30] (5,-0.5) ellipse (8mm and 12mm);
    \draw[fill=gray!30] (7,2.9) ellipse (8mm and 12mm);
    
    \draw[fill=black] (v111) circle[radius=1.5pt]; 
    \draw[fill=black] (v121) circle[radius=1.5pt]; 
    \draw[fill=black] (v131) circle[radius=1.5pt]; 
    \draw[fill=black] (v141) circle[radius=1.5pt]; 
    \draw[fill=black] (v112) circle[radius=1.5pt]; 
    \draw[fill=black] (v122) circle[radius=1.5pt]; 
    \draw[fill=black] (v132) circle[radius=1.5pt]; 
    \draw[fill=black] (v142) circle[radius=1.5pt]; 
    \draw[fill=black] (v113) circle[radius=1.5pt]; 
    \draw[fill=black] (v123) circle[radius=1.5pt]; 
    \draw[fill=black] (v133) circle[radius=1.5pt]; 
    \draw[fill=black] (v143) circle[radius=1.5pt]; 
    \draw[fill=black] (v01) node[below=3pt,scale=1.2] {$\HH_1$};
    
    \draw[fill=black] (v211) circle[radius=1.5pt]; 
    \draw[fill=black] (v221) circle[radius=1.5pt]; 
    \draw[fill=black] (v231) circle[radius=1.5pt]; 
    \draw[fill=black] (v241) circle[radius=1.5pt]; 
    \draw[fill=black] (v212) circle[radius=1.5pt]; 
    \draw[fill=black] (v222) circle[radius=1.5pt]; 
    \draw[fill=black] (v232) circle[radius=1.5pt]; 
    \draw[fill=black] (v242) circle[radius=1.5pt]; 
    \draw[fill=black] (v213) circle[radius=1.5pt]; 
    \draw[fill=black] (v223) circle[radius=1.5pt]; 
    \draw[fill=black] (v233) circle[radius=1.5pt]; 
    \draw[fill=black] (v243) circle[radius=1.5pt]; 
    \draw[fill=black] (v02) node[below=3pt,scale=1.2] {$\HH_2$};
    
    \draw (v111) -- (v121);
    \draw (v121) -- (v131);
    \draw (v131) -- (v111);
    \draw (v111) -- (v142);
    \draw (v121) -- (v143);
    \draw (v112) -- (v122);
    \draw (v122) -- (v132);
    \draw (v132) -- (v112);
    \draw (v112) -- (v143);
    \draw (v122) -- (v141);
    \draw (v113) -- (v123);
    \draw (v123) -- (v133);
    \draw (v133) -- (v113);
    \draw (v113) -- (v141);
    \draw (v123) -- (v142);
    
    \draw (v211) -- (v221);
    \draw (v221) -- (v231);
    \draw (v231) -- (v211);
    \draw (v231) -- (v243);
    \draw (v212) -- (v222);
    \draw (v222) -- (v232);
    \draw (v232) -- (v212);
    \draw (v232) -- (v242);
    \draw (v213) -- (v223);
    \draw (v223) -- (v233);
    \draw (v233) -- (v213);
    \draw (v233) -- (v241);
    
    \draw [-{Latex[length=3mm]}, thick] (v121) -- (v222);
    \draw [-{Latex[length=3mm]}, thick] (v122) -- (v223);
    \draw [-{Latex[length=3mm]}, thick] (v123) -- (v221);
    \draw[fill=black] (v0) node[below=3pt,scale=1.2] {$f_2$};
    
    \draw [-{Latex[length=3mm]}, thick] (v111) arc [start angle=121.99, delta angle=-73.5, radius=5.03125];
    \draw [-{Latex[length=3mm]}, thick] (v112) arc [start angle=121.99, delta angle=-73.5, radius=5.03125];
    \draw [-{Latex[length=3mm]}, thick] (v113) arc [start angle=135.86, delta angle=-72.8, radius=5.125];
    
    \draw [-{Latex[length=3mm]}, thick] (v131) arc [start angle=228.49, delta angle=73.5, radius=5.03125];
    \draw [-{Latex[length=3mm]}, thick] (v132) arc [start angle=228.49, delta angle=73.5, radius=5.03125];
    \draw [-{Latex[length=3mm]}, thick] (v133) arc [start angle=243.06, delta angle=72.8, radius=5.125];
\end{tikzpicture}

\begin{tikzpicture}
    \coordinate (v11) at (-5,0);
    \coordinate (v21) at (-3,3.4);
    \coordinate (v31) at (-1,0);
    \coordinate (v41) at (-7,3.4);
    \coordinate (v51) at (1,3.4);
    \coordinate (v12) at (-5,-0.5);
    \coordinate (v22) at (-3,2.9);
    \coordinate (v32) at (-1,-0.5);
    \coordinate (v42) at (-7,2.9);
    \coordinate (v52) at (1,2.9);
    \coordinate (v13) at (-5,-1);
    \coordinate (v23) at (-3,2.4);
    \coordinate (v33) at (-1,-1);
    \coordinate (v43) at (-7,2.4);
    \coordinate (v53) at (1,2.4);
    \coordinate (v03) at (-3,4.4);
    
    \draw[fill=gray!30] (-5,-0.5) ellipse (8mm and 12mm);
    \draw[fill=gray!30] (-3,2.9) ellipse (8mm and 12mm);
    \draw[fill=gray!30] (-1,-0.5) ellipse (8mm and 12mm);
    \draw[fill=gray!30] (-7,2.9) ellipse (8mm and 12mm);
    \draw[fill=gray!30] (1,2.9) ellipse (8mm and 12mm);
    
    \draw[fill=black] (v11) circle[radius=1.5pt]; 
    \draw[fill=black] (v21) circle[radius=1.5pt]; 
    \draw[fill=black] (v31) circle[radius=1.5pt]; 
    \draw[fill=black] (v41) circle[radius=1.5pt]; 
    \draw[fill=black] (v51) circle[radius=1.5pt]; 
    \draw[fill=black] (v12) circle[radius=1.5pt]; 
    \draw[fill=black] (v22) circle[radius=1.5pt]; 
    \draw[fill=black] (v03) node[above=3pt,scale=1.2] {$\HH$};
    \draw[fill=black] (v32) circle[radius=1.5pt]; 
    \draw[fill=black] (v42) circle[radius=1.5pt]; 
    \draw[fill=black] (v52) circle[radius=1.5pt]; 
    \draw[fill=black] (v13) circle[radius=1.5pt]; 
    \draw[fill=black] (v23) circle[radius=1.5pt]; 
    \draw[fill=black] (v33) circle[radius=1.5pt]; 
    \draw[fill=black] (v43) circle[radius=1.5pt]; 
    \draw[fill=black] (v53) circle[radius=1.5pt]; 
    
    \draw (v11) -- (v21);
    \draw (v21) -- (v31);
    \draw (v31) -- (v11);
    \draw (v11) -- (v42);
    \draw (v21) -- (v43);
    \draw (v31) -- (v52);
    \draw (v12) -- (v22);
    \draw (v22) -- (v32);
    \draw (v32) -- (v12);
    \draw (v12) -- (v43);
    \draw (v22) -- (v41);
    \draw (v32) -- (v51);
    \draw (v13) -- (v23);
    \draw (v23) -- (v33);
    \draw (v33) -- (v13);
    \draw (v13) -- (v41);
    \draw (v23) -- (v42);
    \draw (v33) -- (v53);
\end{tikzpicture}
\end{center}

Before we prove Theorem~\ref{thm: upperboundFixed}, we need a lemma.

\begin{lem}\label{lem: upperGenFixed}
Suppose that $G_1,\ldots,G_n$ are vertex disjoint graphs where $n \geq 2$ and $G \in \bigoplus_{i=1}^{n} (G_i,p)$ where for each $i \in [n]$, $\{u_{i,1},\ldots,u_{i,p}\}$ is a clique in $G_i$ and $G$ is obtained by identifying $u_{1,q},\ldots,u_{n,q}$ as the same vertex $u_q$ for each $q \in [p]$. For each $i \in [n]$, suppose that $\HH_i = (L_i,H_i)$ is an $m$-fold cover of $G_i$ conducive to $\{u_{i,1},\ldots,u_{i,p}\}$. For each $2 \leq i \leq n$, suppose $f_i$ is a permutation of $[m]$. Let $F = (f_2,\ldots,f_n)$. Let $\gamma_1$ be the identity permutation of $[m]^p$, and let $\gamma_i : [m]^p \rightarrow [m]^p$ be the function defined by $\gamma_i((j_1,\ldots,j_p)) = (f_i(j_1),\ldots,f_i(j_p))$ for each $2 \leq i \leq n$. For any $\mathbf{j} = (j_1,\ldots,j_p) \in [m]^{p}$, let $P_{i,\mathbf{j}} = \{(u_{i,q},j_q) : q \in [p]\}$. Let $D = \sum_{\mathbf{j} \in [m]^p} N(P_{1,\mathbf{j}},\HH_1) \prod_{i=2}^{n} N(P_{i,\gamma_i(\mathbf{j})},\HH_i)$. Then $P_{DP}(G,\HH) = D$ where $\HH$ is the $F$-amalgamated cover of $G$ obtained from $\HH_1,\ldots,\HH_n$ which implies $P_{DP}'(G,\{u_{i,1},\ldots,u_{i,p}\},m) \leq D$.
\end{lem}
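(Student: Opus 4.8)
The plan is to follow the same bijective strategy used for Lemma~\ref{lem: upperGen2}, but now keeping track of $p$ coordinates at each glued vertex and of all $n$ blocks simultaneously. Write $K = \{u_1,\ldots,u_p\}$ for the glued clique of $G$, and for $\mathbf{j} = (j_1,\ldots,j_p) \in [m]^p$ let $P_{\mathbf{j}} = \{(u_q,j_q) : q \in [p]\}$. Since $\HH$ is the $F$-amalgamated cover it is conducive to $K$, so $\HH_K$ has a canonical labeling; hence, using that each $\HH_i$ is full, $P_{\mathbf{j}}$ is an independent set of $H$ precisely when $j_1,\ldots,j_p$ are pairwise distinct, and $N(P_{\mathbf{j}},\HH) = 0$ otherwise. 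The same remark applies to each $\HH_i$ together with $P_{1,\mathbf{j}}$ and $P_{i,\gamma_i(\mathbf{j})}$ (here we use that each $f_i$ is a permutation, so $\gamma_i$ preserves the property of having distinct coordinates). Consequently both $P_{DP}(G,\HH) = \sum_{\mathbf{j} \in [m]^p} N(P_{\mathbf{j}},\HH)$ and the sum defining $D$ really run only over tuples with distinct coordinates, so it suffices to prove
\[ N(P_{\mathbf{j}},\HH) = N(P_{1,\mathbf{j}},\HH_1)\prod_{i=2}^{n} N(P_{i,\gamma_i(\mathbf{j})},\HH_i) \]
for each such $\mathbf{j}$.

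Fix such a $\mathbf{j}$ and set $P = P_{\mathbf{j}}$. Let $\II$ be the set of $\HH$-colorings of $G$ containing $P$, and for each $i \in [n]$ let $\II_i$ be the set of $\HH_i$-colorings of $G_i$ containing $P_{i,\gamma_i(\mathbf{j})}$ (with $\gamma_1$ the identity, so that $P_{1,\gamma_1(\mathbf{j})} = P_{1,\mathbf{j}}$). Define $g : \II_1 \times \cdots \times \II_n \to \II$ by
\[ g\bigl((I_1,\ldots,I_n)\bigr) = P \cup \bigcup_{i=1}^{n}\bigl(I_i - P_{i,\gamma_i(\mathbf{j})}\bigr), \]
identifying, as in Definition~\ref{defn: complete amalgamation}, each vertex $(x,r)$ of $H_i$ with $x \notin \{u_{i,1},\ldots,u_{i,p}\}$ with the corresponding vertex $(x,r)$ of $H$. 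The bulk of the work is to check that $g$ is a well-defined bijection. For well-definedness one verifies that the claimed set has size $\sum_{i=1}^n (|V(G_i)|-p) + p = |V(G)|$, meets each $L(v)$ in exactly one vertex, and is independent in $H$: by construction the edges of $H$ are exactly (a) the inherited edges of the various $H_i$ not touching any $L_i(u_{i,q})$, (b) the clique-edges inside each $L(u_q)$ and inside each $\{(u_q,j):q\in[p]\}$, and (c) the edges $(x,r)(u_q,j_q)$ that arise when $(x,r)(u_{1,q},j_q) \in E(H_1)$ (for $x \in V(G_1)$) or $(x,r)(u_{i,q},f_i(j_q)) \in E(H_i)$ (for $x \in V(G_i)$, $i \geq 2$); in each case the presence of such an edge between two chosen vertices would contradict the independence of the relevant $I_i$, which contains the preimages of both endpoints, while no edge of type (b) is present since the coordinates $j_1,\ldots,j_p$ (resp. $f_i(j_1),\ldots,f_i(j_p)$) are distinct. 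For bijectivity the inverse is forced: given $C \in \II$, set $I_i = P_{i,\gamma_i(\mathbf{j})} \cup \{(x,r) \in C : x \in V(G_i) \setminus K\}$; running the same edge analysis backwards shows $I_i \in \II_i$ and $g\bigl((I_1,\ldots,I_n)\bigr) = C$.

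Summing the displayed identity over all $\mathbf{j} \in [m]^p$ yields $P_{DP}(G,\HH) = D$, and since $\HH$ is conducive to $K$ we obtain $P_{DP}'(G,K,m) \leq P_{DP}(G,\HH) = D$, as desired. The main obstacle is purely bookkeeping: one must verify that the componentwise permutation $\gamma_i$ is exactly the device that makes the preimages of the glued vertices align across the $n$ covers — that is, $(u_{i,q}, f_i(j_q))$ in $\HH_i$ is what gets identified with $(u_q, j_q)$ in $\HH$ — and that the three families (a)--(c) of edges of $H$ are genuinely exhaustive, so that independence transfers in both directions. An alternative, slightly longer route is to establish the case $n=2$ first as a direct generalization of Lemma~\ref{lem: upperGen2} and then induct on $n$ by amalgamating $G_1,\ldots,G_{n-1}$ first; this avoids treating all blocks at once, at the cost of first checking that an iterated amalgamation is again an $F$-amalgamation in the sense of Definition~\ref{defn: complete amalgamation}.
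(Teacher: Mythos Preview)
Your proposal is correct and follows essentially the same approach as the paper: both arguments fix a tuple $\mathbf{j}$ with distinct coordinates, construct the map $g\bigl((I_1,\ldots,I_n)\bigr) = P_{\mathbf{j}} \cup \bigcup_{i=1}^{n}(I_i - P_{i,\gamma_i(\mathbf{j})})$, verify it is a well-defined bijection by analyzing the edge structure of the amalgamated cover and exhibiting the explicit inverse, and then sum over~$\mathbf{j}$. The paper's proof is slightly more detailed in writing out the verification that $g$ and its inverse are mutual inverses, but the content is the same.
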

\begin{proof}
For each $\mathbf{j} = (j_1,\ldots,j_p) \in [m]^p$ such that $j_1,\ldots,j_p$ are pairwise distinct, let $P_{\mathbf{j}} = \{(u_q,j_q) : q \in [p]\}$. Let $\II_{i,\mathbf{j}}$ be the set of all $\HH_i$-colorings of $G_i$ that contain $P_{i,\mathbf{j}}$ for each $i \in [n]$, and $\II_{\mathbf{j}}$ be the set of all $\HH$-colorings of $G$ that contain $P_{\mathbf{j}}$. Let $g_{\mathbf{j}} : \prod_{i=1}^{n} \II_{i,\gamma_i(\mathbf{j})} \rightarrow \II_{\mathbf{j}}$ be given by $g_{\mathbf{j}}(I_{1,\gamma_1(\mathbf{j})},\ldots,I_{n,\gamma_n(\mathbf{j})})) = (\bigcup_{i=1}^{n} (I_{i,\gamma_i(\mathbf{j})} - P_{i,\gamma_i(\mathbf{j})})) \cup P_{\mathbf{j}}$.

Given a fixed element $\mathbf{j} = (j_1,\ldots,j_p)$ of $[m]^p$ such that $j_1,\ldots,j_p$ are pairwise distinct, we will now prove that $g_{\mathbf{j}}$ is a bijection. First, we will show that $g_{\mathbf{j}}$ is a function. Notice $|(\bigcup_{i=1}^{n} (I_{i,\gamma_i(\mathbf{j})} - P_{i,\gamma_i(\mathbf{j})})) \cup P_{\mathbf{j}}| = p + \sum_{i=1}^{n} (|V(G_i)|-p) = |V(G)|$. By the definition of $F$-amalgamated cover of $G$ obtained from $\HH_1,\ldots,\HH_n$, $vw \in E(H)$ for some $v,w \in I_{i,\gamma_i(\mathbf{j})} - P_{i,\gamma_i(\mathbf{j})}$ if and only if $vw \in E(H_i)$ for some $i \in [n]$. Since $I_{i,\gamma_i(\mathbf{j})}$ is an independent set in $H_i$ for each $i \in [n]$ and $V(G_1),\ldots,V(G_n)$ are pairwise disjoint, $\bigcup_{i=1}^{n} (I_{i,\gamma_i(\mathbf{j})} - P_{i,\gamma_i(\mathbf{j})})$ is an independent set in $H$.
By the definition of $F$-amalgamated cover of $G$ obtained from $\HH_1,\ldots,\HH_n$, $(u_q,j)(u_{q'},j') \in E(H)$ when $q \not= q'$ if and only if $j = j'$. Since $j_1,\ldots,j_p$ are pairwise distinct, $P_{\mathbf{j}}$ is an independent set in $H$.
By the definition of $F$-amalgamated cover of $G$ obtained from $\HH_1,\ldots,\HH_n$, $(u_q,j_q)v \in E(H)$ for some $(u_q,j_q) \in P_{\mathbf{j}}$ and $v \in I_{i,\gamma_i(\mathbf{j})} - P_{i,\gamma_i(\mathbf{j})}$ if and only if $(u_{1,q},j_q)v \in E(H_1)$ or $(u_{i+1,q},f_{i+1}(j_q))v \in E(H_i)$ for some $i \in [n-1]$. Since $I_{i,\gamma_i(\mathbf{j})}$ is an independent set in $H_i$ for each $i \in [n]$, $(\bigcup_{i=1}^{n} (I_{i,\gamma_i(\mathbf{j})} - P_{i,\gamma_i(\mathbf{j})})) \cup P_{\mathbf{j}}$ must be an independent set in $H$.

Now we will construct the inverse of $g_{\mathbf{j}}$, $h_{\mathbf{j}}$. Let $h_{\mathbf{j}} : \II_{\mathbf{j}} \rightarrow \prod_{i=1}^{n} \II_{i,\gamma_i(\mathbf{j})}$ be given by $h_{\mathbf{j}}(I_{\mathbf{j}}) =  (x_1,\ldots,x_n)$ where $x_i = (V(H_i) \cap I_{\mathbf{j}}) \cup P_{i,\gamma_i(\mathbf{j})}$ for each $i \in [n]$. First, we will show that $h_{\mathbf{j}}$ is a function. Clearly, $x_1 = (V(H_1) \cap I_{\mathbf{j}}) \cup P_{1,\mathbf{j}} \in \II_{1,\gamma_1(\mathbf{j})}$. Now we will show $x_i \in \II_{i,\gamma_i(\mathbf{j})}$ for each $2 \leq i \leq n$. By the definition of $F$-amalgamated cover of $G$ obtained from $\HH_1,\ldots,\HH_n$, because $I_{\mathbf{j}}$ is an independent set in $H$, $V(H_i) \cap I_{\mathbf{j}}$ is an independent set in $H_i$. Since $\HH_i$ is conducive to $P_{i,\mathbf{j}}$, $(u_{i,q},j)(u_{i,q'},j') \in E(H_i)$ when $q \not= q'$ if and only if $j = j'$. Since $j_1,\ldots,j_p$ are pairwise distinct, $f_i(j_1),\ldots,f_i(j_n)$ are pairwise distinct. Thus, $P_{i,\gamma_i(\mathbf{j})}$ is an independent set in $H_i$. By the definition of $F$-amalgamated cover of $G$ obtained from $\HH_1,\ldots,\HH_n$, $(u_{i,q},f_i(j_q))v \in E(H_i)$ for some $v \in V(H_i) \cap I_{\mathbf{j}}$ if and only if $(u_q,j_q)v \in E(H)$. Notice $(u_q,j_q)v \notin E(H)$ because $(u_q,j_q),v \in I_{\mathbf{j}}$. Thus, $x_i = (V(H_i) \cap I_{\mathbf{j}}) \cup P_{i,\gamma_i(\mathbf{j})} \in \II_{i,\gamma_i(\mathbf{j})}$.

Finally, we will show that $g_{\mathbf{j}}$ and $h_{\mathbf{j}}$ are inverses. We calculate
\begin{align*}
    g_{\mathbf{j}}(h_{\mathbf{j}}(I_{\mathbf{j}}))
    &= g_{\mathbf{j}}((x_1,\ldots,x_n))\\
    &= \left(\bigcup_{i=1}^{n} (((V(H_i) \cap I_{\mathbf{j}}) \cup P_{i,\gamma_i(\mathbf{j})}) - P_{i,\gamma_i(\mathbf{j})})\right) \cup P_{\mathbf{j}}\\
    &= \left(\bigcup_{i=1}^{n} V(H_i) \cap I_{\mathbf{j}} \right) \cup P_{\mathbf{j}}
    = I_{\mathbf{j}}.
\end{align*}
We calculate
\begin{align*}
    h_{\mathbf{j}}(g_{\mathbf{j}}((I_{1,\gamma_1(\mathbf{j})},\ldots,I_{n,\gamma_n(\mathbf{j})}))) &= h_{\mathbf{j}}\left(\left(\bigcup_{i=1}^{n} (I_{i,\gamma_i(\mathbf{j})} - P_{i,\gamma_i(\mathbf{j})})\right) \cup P_{\mathbf{j}}\right)
    = (y_1,\ldots,y_n)
\end{align*}
where for each $l \in [n]$,
\begin{align*}
    y_l &= \left(\left(\bigcup_{i=1}^{n} (I_{i,\gamma_i(\mathbf{j})} - P_{i,\gamma_i(\mathbf{j})}) \cup P_{\mathbf{j}}\right) \cap V(H_l)\right) \cup P_{l,\gamma_l(\mathbf{j})}\\
    &= \left((I_{l,\gamma_l(\mathbf{j})} - P_{l,\gamma_l(\mathbf{j})}) \cap V(H_l)\right) \cup P_{l,\gamma_l(\mathbf{j})}\\
    &= \left((I_{l,\gamma_l(\mathbf{j})} - P_{l,\gamma_l(\mathbf{j})}) \cup P_{l,\gamma_l(\mathbf{j})}\right) \cap V(H_l)
    = I_{l,\gamma_l(\mathbf{j})}.
\end{align*}
Therefore, $g_{\mathbf{j}}$ is bijection. As such,  \\ $N(P_{\mathbf{j}},\HH) = |\II_{\mathbf{j}}| = \prod_{i=1}^{n} |\II_{i,\gamma_i(\mathbf{j})}| = N(P_{1,\mathbf{j}},\HH_1) \prod_{i=2}^{n} N(P_{i,\gamma_i(\mathbf{j})},\HH_i)$ for each $\mathbf{j} = (j_1,\ldots,j_p) \in [m]^p$ such that $j_1,\ldots,j_p$ are pairwise distinct. Note $P_{DP}(G,\HH) = \sum_{\mathbf{j} \in [m]^p} N(P_{\mathbf{j}},\HH)$. Also note that $N(P_{\mathbf{j}},\HH) = 0$ when any two distinct coordinates of $\mathbf{j}$ are equal. Therefore, $P_{DP}(G,\HH) = D$ which implies that $P_{DP}'(G,\{u_{i,1},\ldots,u_{i,p}\},m) \leq D$.
\end{proof}

We are now ready to prove Theorem~\ref{thm: upperboundFixed}.

\begin{proof}
The proof is by induction on $n$. We begin by proving the result for $n = 2$. Let $X = \{u_1,\ldots,u_p\}$, and let $X_i = \{u_{i,1},\ldots,u_{i,p}\}$ for each $i \in [2]$. Suppose $\HH_i = (L_i,H_i)$ is an $m$-fold cover of $G_i$ conducive to $X_i$ such that $P_{DP}(G_i,\HH_i) = P_{DP}'(G_i,X_i,m)$.  For every $\mathbf{j} = (j_1,\ldots,j_p) \in [m]^p$, let $P_{i,\mathbf{j}} = \{(u_{i,q},j_q) : q \in [p]\}$ for each $i \in [2]$. Let $f_{\sigma} : [m]^p \rightarrow [m]^p$ be the function defined by $f_{\sigma}((j_1,\ldots,j_p)) = (\sigma(j_1),\ldots,\sigma(j_p))$ where $\sigma \in S_m$ and $S_m$ is the symmetric group on $[m]$. Notice $f_{\sigma}$ is a permutation.

Suppose $T$ is the set of all $(j_1',\ldots,j_p') \in [m]^p$ such that $j_1',\ldots,j_p'$ are pairwise distinct. Notice for each $(j_1',\ldots,j_p'),(j_1'',\ldots,j_p'') \in T$, $|\{\sigma \in S_m : f_{\sigma}((j_1',\ldots,j_p')) = (j_1'',\ldots,j_p'')\}| = (m-p)!$. So, for each $\mathbf{j'} \in T$ and $\sigma' \in S_m$, $\sum_{\sigma \in S_m} N(P_{2,f_{\sigma}(\mathbf{j'})},\HH_2) = (m-p)!\sum_{\mathbf{j} \in T} N(P_{2,f_{\sigma'}(\mathbf{j})},\HH_2)$. Notice for each $i \in [2]$, $N(P_{i,\mathbf{j}},\HH_i) = 0$ when any two distinct coordinates of $\mathbf{j}$ are equal which implies that $\sum_{\mathbf{j} \in T} N(P_{2,f_{\sigma'}(\mathbf{j})},\HH_2) = \sum_{\mathbf{j} \in [m]^p} N(P_{2,f_{\sigma'}(\mathbf{j})},\HH_2)$. Notice $P_{DP}'(G_1,X_1,m) = \sum_{\mathbf{j} \in [m]^p} N(P_{1,\mathbf{j}},\HH_1)$ and $P_{DP}'(G_2,X_2,m) = \sum_{\mathbf{j} \in [m]^p} N(P_{2,f_{\sigma}(\mathbf{j})},\HH_2)$ for each $\sigma \in S_m$. Let $D_\sigma = \sum_{\mathbf{j} \in [m]^p} N(P_{1,\mathbf{j}},\HH_1) N(P_{2,f_{\sigma}(\mathbf{j})},\HH_2)$. We calculate that for any $\sigma' \in S_m$c,
\begin{align*}
    \sum_{\sigma \in S_m} D_\sigma
    &= \sum_{\sigma \in S_m} \sum_{\mathbf{j} \in [m]^p} N(P_{1,\mathbf{j}},\HH_1) N(P_{2,f_{\sigma}(\mathbf{j})},\HH_2)\\
    &= \sum_{\mathbf{j} \in [m]^p} \left(N(P_{1,\mathbf{j}},\HH_1) \sum_{\sigma \in S_m} N(P_{2,f_{\sigma}(\mathbf{j})},\HH_2)\right)\\
    &= \sum_{\mathbf{j} \in [m]^p} \left(N(P_{1,\mathbf{j}},\HH_1) (m-p)!\sum_{\mathbf{j} \in [m]^p} N(P_{2,f_{\sigma'}(\mathbf{j})},\HH_2)\right)\\
    &= (m-p)!P_{DP}'(G_2,X_2,m)\sum_{\mathbf{j} \in [m]^p} N(P_{1,\mathbf{j}},\HH_1)\\
    &= (m-p)!P_{DP}'(G_2,X_2,m)P_{DP}'(G_1,X_1,m).
\end{align*}
By Lemma~\ref{lem: upperGenFixed}, we know that $P_{DP}'(G,X,m) \leq D_\sigma$ for each $\sigma \in S_m$. Thus, there is a $\sigma' \in S_m$ such that $$P_{DP}'(G,X,m) \leq D_{\sigma'} \leq \frac{\sum_{\sigma \in S_m} D_\sigma}{m!} = \frac{P_{DP}'(G_1,X_1,m)P_{DP}'(G_2,X_2,m)}{\prod_{i=0}^{p-1} (m-i)}$$
which completes the basis step since $P_{DP}(G,m) \leq P_{DP}'(G,X,m)$. 

Now suppose $n \geq 3$ and the result holds for all natural numbers greater than $1$ and less than $n$. Let $G'$ be the graph obtained by identifying $u_{1,q},\ldots,u_{n-1,q}$ as the same vertex $u_q'$ for each $q \in [p]$. Let $X' = \{u_1',\ldots,u_p'\}$. By the inductive hypothesis, for each $m \geq p$, $$P_{DP}'(G',X',m) \leq \frac{\prod_{i=1}^{n-1} P_{DP}'(G_i,X_i,m)}{\prod_{i=0}^{p-1} (m-i)^{n-2}}.$$ Notice $G$ is the graph obtained by identifying $u_q'$ and $u_{n,q}$ as the same vertex $u_q$ for each $q \in [p]$. Thus, by the inductive hypothesis, $$P_{DP}'(G,X,m) \leq \frac{P_{DP}'(G',X',m)P_{DP}'(G_n,X_n,m)}{\prod_{i=0}^{p-1} (m-i)} \leq \frac{\prod_{i=1}^{n} P_{DP}'(G_i,X_i,m)}{\prod_{i=0}^{p-1} (m-i)^{n-1}}$$ which implies that $P_{DP}(G,m) \leq \left(\prod_{i=1}^{n} P_{DP}'(G_i,X_i,m)\right)/\left(\prod_{i=0}^{p-1}(m-i)^{n-1}\right)$.
\end{proof}

{\bf Acknowledgment.}  This paper is based on a research project conducted with undergraduate students Michael Maxfield and Seth Thomason at the College of Lake County during the spring and summer of 2021. The support of the College of Lake County is gratefully acknowledged. The authors also thank Vu Bui for helpful conversations.

\end{document}